\begin{document}
\title{ Classification of isolated singularities of positive solutions for Choquard equations}
\date{}
\maketitle

\vspace{ -1\baselineskip}

{\small
\begin{center}

\medskip

  {\sc  Huyuan Chen}
  \medskip

 Department of Mathematics, Jiangxi Normal University,\\
Nanchang, Jiangxi 330022, PR China\\[2mm]
and\\
 NYU-ECNU Institute of Mathematical Sciences at NYU Shanghai,\\
 Shanghai 200120, PR China\\
 Email: chenhuyuan@yeah.net\\[10pt]
 {\sc  Feng Zhou}
   \medskip

Center for PDEs and Department of Mathematics, East China Normal University,\\
 Shanghai, PR China\\
 Email: fzhou@math.ecnu.edu.cn\\[10pt]

\end{center}
}

\medskip

\begin{quote}
{\bf Abstract.}
In this paper we classify the isolated singularities of positive solutions to Choquard equation
 $$\displaystyle \ \  -\Delta   u+  u =I_\alpha[u^p] u^q\;\;
  {\rm in}\;  \mathbb{R}^N\setminus\{0\},
 \;\;  \displaystyle   \lim_{|x|\to+\infty}u(x)=0,
$$
where $p >0, q \geq 1, N \geq 3, \alpha \in (0,N)$ and
$I_{\alpha}[u^p](x) = \int_{\mathbb{R}^N} \frac{u(y)^p}{|x-y|^{N-\alpha}}dy.$
We show that any positive solution  $u$
is a solution of
 $$ -\Delta u+u=I_\alpha[u^p]u^q+k\delta_0\quad {\rm in}\; \mathbb{R}^N$$
in the distributional sense for some $k\ge 0$, where $\delta_0$ is the Dirac mass at the origin.
We prove the existence of singular solutions in the subcritical case: $ p+q< \frac{N+\alpha}{N-2}\;{\rm and}\; p< \frac{N}{N-2}, q<\frac{N}{N-2}$
 and prove that either the solution $u$ has removable singularity at the origin
or satisfies $\lim_{|x| \to 0^+} u(x)|x|^{N-2}=C_N$ which is a positive constant. In the supercritical case: $p+q\ge \frac{N+\alpha}{N-2}\;{\rm or}\; p\ge \frac{N}{N-2},\;{\rm or}\; q\ge \frac{N}{N-2}$ we prove that $k=0$.

\end{quote}

 \renewcommand{\thefootnote}{}
 \footnote{AMS Subject Classifications: 35J75, 35B40.}
\footnote{Key words: Classification of singularity; Choquard equation; Nonlocal problem; Decay asymptotic}

\newcommand{\N}{\mathbb{N}}
\newcommand{\R}{\mathbb{R}}
\newcommand{\Z}{\mathbb{Z}}

\newcommand{\cA}{{\mathcal A}}
\newcommand{\cB}{{\mathcal B}}
\newcommand{\cC}{{\mathcal C}}
\newcommand{\cD}{{\mathcal D}}
\newcommand{\cE}{{\mathcal E}}
\newcommand{\cF}{{\mathcal F}}
\newcommand{\cG}{{\mathcal G}}
\newcommand{\cH}{{\mathcal H}}
\newcommand{\cI}{{\mathcal I}}
\newcommand{\cJ}{{\mathcal J}}
\newcommand{\cK}{{\mathcal K}}
\newcommand{\cL}{{\mathcal L}}
\newcommand{\cM}{{\mathcal M}}
\newcommand{\cN}{{\mathcal N}}
\newcommand{\cO}{{\mathcal O}}
\newcommand{\cP}{{\mathcal P}}
\newcommand{\cQ}{{\mathcal Q}}
\newcommand{\cR}{{\mathcal R}}
\newcommand{\cS}{{\mathcal S}}
\newcommand{\cT}{{\mathcal T}}
\newcommand{\cU}{{\mathcal U}}
\newcommand{\cV}{{\mathcal V}}
\newcommand{\cW}{{\mathcal W}}
\newcommand{\cX}{{\mathcal X}}
\newcommand{\cY}{{\mathcal Y}}
\newcommand{\cZ}{{\mathcal Z}}

\newcommand{\abs}[1]{\lvert#1\rvert}
\newcommand{\xabs}[1]{\left\lvert#1\right\rvert}
\newcommand{\norm}[1]{\lVert#1\rVert}

\newcommand{\loc}{\mathrm{loc}}
\newcommand{\p}{\partial}
\newcommand{\h}{\hskip 5mm}
\newcommand{\ti}{\widetilde}
\newcommand{\D}{\Delta}
\newcommand{\e}{\epsilon}
\newcommand{\bs}{\backslash}
\newcommand{\ep}{\emptyset}
\newcommand{\su}{\subset}
\newcommand{\ds}{\displaystyle}
\newcommand{\ld}{\lambda}
\newcommand{\vp}{\varphi}
\newcommand{\wpp}{W_0^{1,\ p}(\Omega)}
\newcommand{\ino}{\int_\Omega}
\newcommand{\bo}{\overline{\Omega}}
\newcommand{\ccc}{\cC_0^1(\bo)}
\newcommand{\iii}{\opint_{D_1}D_i}

\numberwithin{equation}{section}

\vskip 0.2cm \arraycolsep1.5pt
\newtheorem{lemma}{Lemma}[section]
\newtheorem{theorem}{Theorem}[section]
\newtheorem{definition}{Definition}[section]
\newtheorem{proposition}{Proposition}[section]
\newtheorem{remark}{Remark}[section]
\newtheorem{corollary}{Corollary}[section]

\setcounter{equation}{0}
\section{Introduction}

In this paper, we are concerned with the classification of all positive solutions to Choquard equation
\begin{equation}\label{eq 1.1}
  \arraycolsep=1pt
\begin{array}{lll}
 \displaystyle  \ \ -\Delta   u+  u =I_\alpha[u^p] u^q\quad
  {\rm in}\quad  \mathbb{R}^N\setminus\{0\},\\[2mm]
 \phantom{    }
\qquad \ \displaystyle   \lim_{|x|\to+\infty}u(x)=0,
\end{array}
\end{equation}
where     $p>0,\, q\ge1$,    $N\ge3$, $\alpha\in(0,N)$ and
$$I_\alpha[u^p](x)=\int_{\mathbb{R}^N}\frac{u(y)^p}{|x-y|^{N-\alpha}}\, dy.$$

When $N=3$, $\alpha=p=2$ and $q=1$, problem (\ref{eq 1.1}) was proposed  by P. Choquard as an approximation to Hartree-Fock theory for a one component plasma,
which has been explained in Lieb and Lieb-Simon's papers \cite{L0,LS} respectively. It is also called Choquard-Pekar equation after a more early work of S. Paker for describing the quantum mechanics of a polaron at rest \cite{P}, or sometime the nonlinear Schr\"odinger-Newton equation in the context of self-gravitating matter \cite{WW}. The Choquard type equations also arise in the  physics of multiple-particle systems,
see \cite{G}. Furthermore, the Choquard type equations appear to be a prototype of the nonlocal problems, which play a fundamental role in some Quantum-mechanical and non-linear optics,
refer to \cite{GL,O}. When $\alpha\in(0,2)$, the Riesz potential $I_\alpha$ is related to the fractional Laplacian, which is a nonlocal operator,
so the Choquard equation (\ref{eq 1.1}) could be divided into a system with the Laplacian in the
linear part of the first equation and fractional Laplacian in the second one. For the related topics on the fractional equation we can refer for example to \cite{CS0,CS1,CV0,CV1}.

The study of isolated singularities is initiated by Brezis and Lions in \cite{BL}, where an useful tool to connect the singular solutions of
elliptic equation in punctured domain and the solutions of corresponding elliptic equation in the  distributional
 sense was built, by the study of
$$  \arraycolsep=1pt
\begin{array}{lll}
   \Delta    u\le  a u+f\quad
  {\rm in}\; \Omega\setminus\{0\},\quad u>0\;\; {\rm in} \; \Omega\setminus\{0\},\\[2mm]
 \quad \ u\in L^1(\Omega), \quad \Delta u\in L^1(\Omega\setminus\{0\}),
\end{array}
$$
where $\Omega$ is a bounded domain in $\R^N $ containing the origin, the parameter $a>0$ and function $f\in L^1(\Omega)$.
Later on, the classification of isolated singular problem
\begin{equation}\label{eq 1.4}
\arraycolsep=1pt
\begin{array}{lll}
 -\Delta   u=u^p\quad
 &{\rm in}\quad \Omega\setminus\{0\},
 \\[2mm]
 \phantom{  --   }
 \displaystyle u>0\ \
 &{\rm in}\ \ \Omega
\end{array}
\end{equation}
  was performed  by Lions in \cite{L} for $p\in (1,\frac{N}{N-2})$,
by  Aviles in \cite{A} for $p = \frac{N}{N-2}$, by Gidas-Spruck in \cite{GS}
for $\frac{N}{N-2} < p < \frac{N+2}{N-2}$,  by Caffarelli-Gidas-Spruck in \cite{CGS} for
$p=\frac{N+2}{N-2}$. For the case that $p>\frac{N+2}{N-2}$, the classification  of isolated singularities
for (\ref{eq 1.4}) is still open.  In the particular case of $p\in (1,\frac{N}{N-2})$, any positive solution of (\ref{eq 1.4})
is a solution of
  \begin{equation}\label{eq 1.5}
 \displaystyle    -\Delta    u=u^p+k\delta_0\quad{\rm in}\quad \Omega
\end{equation}
in the distributional
 sense for some $k\ge0$. Furthermore, for suitable $k$, problem \eqref{eq 1.5} has at least two positive solutions including the minimal solution. More related topics could be referred to the 
references  \cite{BP,BB,BV,
DDG,DGW,MV,V}.

Our interest in this paper is to classify the singularities of positive classical solutions for Choquard   equation (\ref{eq 1.1}). Here $u$ is said to be a classical solution of (\ref{eq 1.1}) if $u\in C^2(\R^N\setminus\{0\})$, $I_\alpha[u^p]$ is well-defined in $\R^N\setminus\{0\}$ and $u$ satisfies  (\ref{eq 1.1}) pointwisely. The first result can be stated  as follows.


\begin{theorem}\label{teo 0}
Assume that $N\ge3$,  $\alpha\in(0,N)$,
 $p>0$,  $ q\ge 1$
 and $u$ is a positive classical solution of (\ref{eq 1.1}) satisfying $u\in L^p(\R^N)$.

     Then  $I_\alpha[u^p]u^q\in L^1(\R^N)$ and
there exists $k\ge0$
  such that  $u$ is a solution of
 \begin{equation}\label{eq 1.2}
      -\Delta u+u=I_\alpha[u^p]u^q+k\delta_0\quad
 {\rm in}\quad \R^N,
\end{equation}
in the distributional sense, that is the following identity holds,
 \begin{equation}\label{1.1}
     \int_{\R^N} [u (-\Delta  \xi+ \xi) -I_\alpha[u^p]u^q\xi]\, dx=k\xi(0),\quad \forall \xi\in C^\infty_c(\R^N),
\end{equation}
where $C^\infty_c(\R^N)$ is the space of all the functions in  $C^\infty(\R^N)$   with  compact support.

Furthermore,
$(i)$  when
\begin{equation}\label{1.3}
 p+q\ge \frac{N+\alpha}{N-2}\quad{\rm or}\quad p\ge \frac{N}{N-2}\quad{\rm or}\quad q\ge \frac{N}{N-2},
\end{equation}
then $k=0$.

$(ii)$ When
\begin{equation}\label{1.4}
 p+q< \frac{N+\alpha}{N-2}\quad{\rm and}\quad p< \frac{N}{N-2}\quad{\rm and}\quad q<\frac{N}{N-2}£¬
\end{equation}
  and if $k=0$,  then $u$ is a classical solution of
  \begin{equation}\label{eq 1.3}
  \arraycolsep=1pt
\begin{array}{lll}
 \displaystyle  -\Delta  u+u=I_\alpha[u^p]u^q\quad
 &{\rm in}\quad \R^N,\\[2mm]
 \phantom{   }
 \qquad \displaystyle   \lim_{|x|\to+\infty}u(x)=0;
\end{array}
\end{equation}
 if $k>0$, then $u$ satisfies that
\begin{equation}\label{1.2}
 \lim_{|x|\to0^+} u(x)|x|^{N-2}=c_{N} k,
\end{equation}
where $c_N$ is the normalized constant.
\end{theorem}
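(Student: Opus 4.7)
The plan is to first establish the distributional identity via a Brezis-Lions argument and then analyze the two regimes separately using comparison with the fundamental solution of $-\Delta+I$. For (\ref{1.1}), I would first verify that $I_\alpha[u^p]u^q\in L^1(\R^N)$: away from the origin this follows from classical regularity of $u$, its decay at infinity and the hypothesis $u\in L^p(\R^N)$; near the origin one tests the equation against a cut-off and controls the nonlinear term by the linear part, as in the original Brezis-Lions lemma \cite{BL}. Once this is in hand, set $T:=-\Delta u+u-I_\alpha[u^p]u^q\in\cD'(\R^N)$. Because $u$ satisfies the pointwise equation on $\R^N\setminus\{0\}$, the distribution $T$ is supported at the origin, and is therefore a finite linear combination of derivatives of $\delta_0$; using the positivity of $u$ together with a sequence of test functions concentrating at $0$ one rules out all derivatives of $\delta_0$ of positive order and obtains $T=k\delta_0$ with $k\ge 0$.

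For the supercritical case (\ref{1.3}) I would argue by contradiction, supposing $k>0$. Convolving the distributional equation with the Green function $G$ of $-\Delta+I$ on $\R^N$ (which is positive and satisfies $G(x)=c_N|x|^{-(N-2)}(1+o(1))$ as $|x|\to 0$) gives $u\ge kG$, hence $u(x)\ge c|x|^{-(N-2)}$ near the origin. Three contradictions then appear according to which branch of (\ref{1.3}) holds. If $p\ge N/(N-2)$ then $u^p\ge c|x|^{-p(N-2)}$ is not locally integrable, contradicting $u\in L^p(\R^N)$. If $q\ge N/(N-2)$ then, since $I_\alpha[u^p](x)$ is bounded below by a positive constant near $0$, we get $I_\alpha[u^p]u^q\ge c|x|^{-q(N-2)}\notin L^1_{\rm loc}$, contradicting the first step. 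If $p+q\ge (N+\alpha)/(N-2)$ with $p,q<N/(N-2)$, restricting the Riesz integral to $\{|y|\le |x|/2\}$ and inserting the lower bound on $u$ yields $I_\alpha[u^p](x)\ge c|x|^{\alpha-p(N-2)}$, whence $I_\alpha[u^p]u^q\ge c|x|^{\alpha-(p+q)(N-2)}$ with exponent $\le -N$, again non-integrable.

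For the subcritical case (\ref{1.4}) I would work with the representation $u=G*(I_\alpha[u^p]u^q)+kG$. The crux is to prove that $v:=G*(I_\alpha[u^p]u^q)$ is bounded near the origin; granted this, $G(x)\sim c_N|x|^{-(N-2)}$ immediately gives (\ref{1.2}) when $k>0$, and when $k=0$ yields that $u$ is locally bounded, so standard elliptic regularity produces a classical $C^2$ solution on $\R^N$ satisfying (\ref{eq 1.3}). To prove boundedness of $v$ I would bootstrap: starting from the crude bound $u\le kG+v$, feeding it back into $I_\alpha[u^p]u^q$, and using that under (\ref{1.4}) the exponents $p(N-2)<N$, $q(N-2)<N$ and $\alpha-(p+q)(N-2)>-N$ are precisely what is needed for the iterated integrals to converge. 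This bootstrap is the main technical obstacle: one must simultaneously control the Bessel kernel $G$ and the Riesz kernel of order $\alpha$, verifying at each step that the sharper estimates continue to close under (\ref{1.4}) and that the leading singularity of $u$ near $0$ comes from $G$ rather than from $v$.
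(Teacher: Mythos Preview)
Your approach to the distributional identity and to the supercritical regime~(\ref{1.3}) matches the paper's essentially line by line: the Brezis--Lions/Lions mechanism for the $k\delta_0$ term, the lower bound $u\ge k\Gamma_0$ via the Green function, and the three-branch contradiction are exactly what is done in Proposition~\ref{pr 2.1}.

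The gap is in the subcritical analysis. Your claim that $v:=\mathbb{G}[I_\alpha[u^p]u^q]$ is \emph{bounded} near the origin is in general false when $k>0$. Indeed, once $u\ge k\Gamma_0$ is established, the term $\mathbb{G}\!\left[I_\alpha[\Gamma_0^p]\Gamma_0^q\right]$ is a lower-order contribution to $v$ that does not vanish under iteration; near the origin it behaves like $|x|^{2+\alpha-(p+q)(N-2)}$, and this exponent is strictly negative whenever $p+q>\frac{2+\alpha}{N-2}$, which is perfectly compatible with the subcritical range $p+q<\frac{N+\alpha}{N-2}$. The bootstrap you describe, feeding $u\le k\Gamma_0+v$ back into the nonlinearity, will always regenerate this fixed singular piece, so it cannot yield boundedness of $v$.

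What is actually true---and sufficient for~(\ref{1.2})---is the weaker statement $v(x)|x|^{N-2}\to 0$. The paper obtains this (Proposition~\ref{pr 2.3}) by a more elaborate decomposition: it writes $v=u_1$, then iteratively splits $u_{n}\le u_{n+1}+\Gamma_n$ where each $\Gamma_n$ is an \emph{explicit} singular function with $\Gamma_n(x)\le c|x|^{T_n}$ and $T_n>2-N$, while the remainder $u_n$ has strictly improving $L^t$-integrability. After finitely many steps $u_{n_0}\in L^\infty$, so that $v\le u_{n_0}+\sum_{i=1}^{n_0-1}\Gamma_i$ with every summand $o(|x|^{2-N})$. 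For the $k=0$ subcase the paper runs a separate $L^s$-bootstrap (Proposition~\ref{pr 2.2}) balancing $I_\alpha[u^p]$ and $u^q$ via H\"older with a carefully chosen splitting exponent; this is needed because the product structure of the nonlinearity mixes two different scaling rates. Your outline should be amended to target $v=o(|x|^{2-N})$ rather than $v\in L^\infty$, and to track the singular remainders explicitly.
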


The solution of (\ref{eq 1.1}) in the distributional sense are sometimes called the very weak solution. We call also the pair exponent $(p,\,q)$ is supercritical if (\ref{1.3}) holds  and  $(p,\,q)$ is subcritical if (\ref{1.4}) does. Theorem \ref{teo 0} shows that in the supercritical case,
the singularities of positive solutions of (\ref{eq 1.1}) are not visible in the distribution sense by the Dirac mass. 
In the subcritical case the solutions of (\ref{eq 1.1}) may have the singularity as $|x|^{2-N}$ or removable singularity at the origin.  
In the subcritical case and when $k=0$, we improve the regularity of $u$ by separating the factors $I_\alpha[u^p],\, u^q$ of nonlinearity and using  the bootstrap argument,
however, the factors $I_\alpha[u^p],\, u^q$ have different growth rates in $L^t$ estimates and the key point is to balance them; while $k>0$, in order to study (\ref{1.2}),
our strategy is to divide $u$ into
$$u\le u_n+\sum_{i=1}^{n-1}\Gamma_i+ k\Gamma_0,$$
where $\Gamma_0$ is the fundamental solution of $-\Delta u+u=\delta_0$ in $\R^N$,
$\Gamma_i$ are generated by $\Gamma_0$ but with lower singularities, $u_n$ is the remainder term.
Our aim here is to find some $n_0$ such that $u_{n_0}$ is bounded at the origin. The difficulty in this procedure
is   to control the singularity of $\sum_{i=1}^{n-1}\Gamma_i$ and to improve the regularity of $u_n$ at the same time.
To this end, we develop the bootstrap argument, to reduce the singularity of $\Gamma_n$ first  until to be bounded and then to improve the regularity of $u_n$ without the influence
of singularities from  $\Gamma_n$.
We mention that \cite{GL,L1,MVS} show that the nonlinear Choquard equation 
 admits variational solutions in the case of $q =p-1$, which have no singularities at the origin.
  See a survey \cite{MVS2} and the references therein.



Our second aim  of this paper is to decide  whether (\ref{eq 1.1}) has singular solutions in the subcritical case. To this end, we shall search the weak solutions of (\ref{eq 1.2}), where  the restriction $\lim_{|x|\to +\infty} u(x)=0$
in (\ref{eq 1.2}) is viewed as
$$\lim_{r\to+\infty} {ess{\rm sup}_{x\in\R^N\setminus B_r(0)}}u(x)=0.$$
Now it is ready to state the existence and nonexistence of weak solutions of (\ref{eq 1.2}).

 \begin{theorem}\label{teo 1}
Assume that  $N\ge3$,  $\alpha\in(0,N)$, $p>0,\, q\ge1$ satisfy (\ref{1.4}) and denote
$$k_q=\left(\frac1{c_{1} (p+q)}\right)^{\frac1{p+q-1}}\frac{p+q-1}{p+q},$$
where $c_1$ is the constant from (\ref{1.5}).
Then there exists $k^*\ge k_q$
  such that

  $(i)$ for $ k\in(0, k^*)$, problem (\ref{eq 1.2}) admits a minimal positive weak solution $u_{k}$;

 $(ii)$ for $k> k^*$,  problem (\ref{eq 1.2}) admits no positive weak solution.

Furthermore, if $k\le k_q$, $u_k$ is a classical solution of (\ref{eq 1.1}) and satisfies (\ref{1.2}).

\end{theorem}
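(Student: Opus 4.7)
\medskip

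\noindent\textbf{Proof proposal for Theorem \ref{teo 1}.} The plan is to reformulate the weak problem (\ref{eq 1.2}) as a fixed-point equation and produce the minimal solution by monotone iteration. Let $\mathbb{G}$ denote the Bessel convolution associated with $-\Delta+\mathrm{Id}$ on $\R^N$, so that $\mathbb{G}[\delta_0]=\Gamma_0$ and, for nonnegative integrable data $f$, $\mathbb{G}[f]$ is the unique nonnegative distributional solution of $-\Delta v+v=f$ vanishing at infinity. A nonnegative weak solution of (\ref{eq 1.2}) is then precisely a fixed point of
\[
 T(u) \;=\; k\Gamma_0 \;+\; \mathbb{G}\bigl[I_\alpha[u^p]\,u^q\bigr].
\]
I iterate $u_0=0$, $u_{n+1}=T(u_n)$; since $T$ preserves positivity and is monotone, $(u_n)$ is nondecreasing from $u_1=k\Gamma_0\ge 0$.

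The decisive ingredient is the pointwise estimate $\mathbb{G}\bigl[I_\alpha[\Gamma_0^p]\,\Gamma_0^q\bigr](x)\le c_1\Gamma_0(x)$ on $\R^N\setminus\{0\}$, which is (\ref{1.5}). Near $0$, $\Gamma_0(y)\sim |y|^{2-N}$, and the usual scaling of the Riesz kernel gives $I_\alpha[\Gamma_0^p](y)\lesssim |y|^{\alpha-p(N-2)}+1$ under the subcritical bound $p(N-2)<N$; hence $I_\alpha[\Gamma_0^p]\Gamma_0^q$ is dominated by $|y|^{\alpha-(p+q)(N-2)}$, and applying $\mathbb{G}$ gains two powers of $|x|$ near the origin, yielding an exponent $\alpha+2-(p+q)(N-2)$ which dominates $2-N$ exactly under the subcritical condition (\ref{1.4}). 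At infinity, the exponential decay of $\Gamma_0$ dominates both factors, and the pointwise comparison closes after matching constants.

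With this estimate, I test the barrier $\bar u=t\Gamma_0$: the supersolution condition $\bar u\ge T(\bar u)$ reduces to the scalar inequality $k+c_1 t^{p+q}\le t$, and maximising $t-c_1 t^{p+q}$ over $t>0$ yields the optimum $t_*=\bigl(c_1(p+q)\bigr)^{-1/(p+q-1)}$ and the threshold $k_q=t_*\,(p+q-1)/(p+q)$. Thus for every $k\in(0,k_q]$ the function $\bar u$ is a supersolution, so $u_n\le\bar u$ for all $n$ by induction; monotone convergence identifies $u_k:=\lim u_n$ as a positive weak solution with $u_k\le t_*\Gamma_0$, in particular $u_k\to 0$ at infinity in the essential-supremum sense. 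The construction makes $u_k$ minimal: for any other nonnegative weak solution $v$, one has $v=T(v)$ and iterating shows $v\ge u_n$ for all $n$. To extend the existence up to $k^*:=\sup\{k>0:(\ref{eq 1.2})\text{ admits a positive weak solution}\}\ge k_q$, I note that for $k<k'\le k^*$ any solution $u_{k'}$ satisfies $u_{k'}=k'\Gamma_0+\mathbb{G}[I_\alpha[u_{k'}^p]u_{k'}^q]\ge k\Gamma_0+\mathbb{G}[I_\alpha[u_{k'}^p]u_{k'}^q]=T_k(u_{k'})$, hence it is a supersolution for the $k$-problem, and the same iteration converges to a minimal $u_k\le u_{k'}$; part (ii) is then the very definition of $k^*$.

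It remains to upgrade $u_k$ to a classical solution of (\ref{eq 1.1}) and to prove (\ref{1.2}) when $k\le k_q$. The bound $u_k\le t_*\Gamma_0$ gives $u_k\in L^p(\R^N)$ because $\Gamma_0\in L^p(\R^N)$ for $p<N/(N-2)$, and $u_k$ is locally bounded on $\R^N\setminus\{0\}$; interior $L^r$ and Schauder regularity applied to $-\Delta u_k+u_k=I_\alpha[u_k^p]u_k^q$ on compact subsets of $\R^N\setminus\{0\}$ then yields $u_k\in C^2(\R^N\setminus\{0\})$, hence a classical solution. Finally, Theorem \ref{teo 0}(ii) applied to $u_k$ with $k>0$ delivers the asymptotic (\ref{1.2}). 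The main obstacle I anticipate is obtaining the sharp pointwise comparison $\mathbb{G}\bigl[I_\alpha[\Gamma_0^p]\Gamma_0^q\bigr]\le c_1\Gamma_0$ with an \emph{explicit} constant $c_1$: one must stitch together the near-origin Riesz-potential asymptotics, the intermediate regime where the Bessel kernel transitions from $|x|^{2-N}$ to exponential decay, and the exponential tail, and track every constant so that the clean threshold $k_q$ in the theorem comes out correctly.
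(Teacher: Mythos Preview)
Your overall architecture---monotone iteration from $k\Gamma_0$, a barrier to bound the sequence, minimality by comparison, and the definition of $k^*$---is exactly the paper's. The substantive gap is in the barrier.

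You take $\bar u=t\Gamma_0$ and invoke the inequality $\mathbb{G}[I_\alpha[\Gamma_0^p]\Gamma_0^q]\le c_1\Gamma_0$, calling it (\ref{1.5}). But (\ref{1.5}) is \emph{not} stated for $\Gamma_0$: it is stated for $\Phi_0$, the fundamental solution of $-\Delta+\tfrac14$, and this is not a cosmetic difference. Your justification ``at infinity, the exponential decay of $\Gamma_0$ dominates both factors'' is where the argument breaks: the Riesz potential $I_\alpha[\Gamma_0^p]$ does \emph{not} decay exponentially---since $\Gamma_0^p\in L^1$, one only has $I_\alpha[\Gamma_0^p](x)\sim c|x|^{\alpha-N}$ at infinity. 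Thus for $q=1$ the right side behaves like $|x|^{\alpha-N}\Gamma_0(x)$, and the comparison principle cannot close because $(-\Delta+1)\Gamma_0=0$ away from the origin leaves no room to absorb a strictly positive right-hand side. A Laplace-type computation of the convolution in fact shows that for $q=1$ and $\alpha\in(N-1,N)$ the quantity $\mathbb{G}[I_\alpha[\Gamma_0^p]\Gamma_0](x)/\Gamma_0(x)$ diverges polynomially as $|x|\to\infty$, so the inequality you need is false in that range. The paper's remedy is precisely to replace $\Gamma_0$ by $\Phi_0$: since $\Gamma_0\le\Phi_0$ and $(-\Delta+1)\Phi_0=\tfrac34\Phi_0>0$, one gets $-\Delta\Phi_0+\Phi_0\ge I_\alpha[\Phi_0^p]\Phi_0^q$ for $|x|$ large (once $I_\alpha[\Phi_0^p]\le\tfrac14$ and $\Phi_0<1$), and the comparison yields (\ref{1.5}). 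The barrier is then $w_t=tk^{p+q}\mathbb{G}[I_\alpha[\Phi_0^p]\Phi_0^q]+k\Phi_0\le(c_1tk^{p+q}+k)\Phi_0$, and the scalar inequality one must solve is $(c_1tk^{p+q-1}+1)^{p+q}\le t$ rather than your $k+c_1t^{p+q}\le t$; both optimisations lead to the same $k_q$, but only the $\Phi_0$-based barrier actually dominates the iterates.

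A second omission: you define $k^*$ as a supremum and read off part (ii) from the definition, but you never show $k^*<+\infty$, without which (ii) is vacuous. The paper obtains this by testing the weak formulation against $\xi_\epsilon(x)=\eta_0(\epsilon x)\mathbb{G}[\eta_0](x)$ and using $u_k\ge k\Gamma_0$, $I_\alpha[u_k^p]\ge c k^p$ on $B_1(0)$ to force $k^{p+q-1}\le C$.
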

When $q=p-1$, V. Moroz and J. Van Schaftingen \cite{MVS1} have derived  groundstates of (\ref{eq 1.3}) for $\frac{N+\alpha}{N}<p<\frac{N+\alpha}{N-2}$
by variational method. Furthermore, this existence result is sharp in the sense that there is no nontrivial regular variational solution to  (\ref{eq 1.3}) for
$p\le \frac{N+\alpha}{N}$ and $p\ge \frac{N+\alpha}{N-2}$. Usually, the derivation of the very weak solution is different by using nonvariational methods.
The solution $u_k$ of (\ref{eq 1.2}) is derived by iterating procedure:
$$v_0= k \mathbb{G} [\delta_0], \qquad v_n  = \mathbb{G} [v_{n-1}^p]+ k \mathbb{G} [\delta_0],$$
where $\mathbb{G}$ is the Green's operator defined by the Green kernel $G(x,y)$ of $  -\Delta+id $ in $\R^N\times\R^N$.
Here the main difficulty  is to find a barrier function to control the sequence $\{v_n\}_n$. It is well-known that
in the bounded domain $\Omega$ and 
$\gamma\in(1,\frac{N}{N-2})$ 
the barrier function is constructed by the fact that
 \begin{equation}\label{1.5q}
 \mathbb{G}_\Omega[\mathbb{G}_\Omega^\gamma[\delta_0]]\le c_2\mathbb{G}_\Omega[\delta_0]\quad {\rm in}\quad \Omega\setminus\{0\},
 \end{equation}
where $c_2>0$ and $\mathbb{G}_\Omega[\cdot]$ is the Green's operator defined by Green's kernel  of $  -\Delta+id $  in $\Omega\times\Omega$.
However, the estimate  (\ref{1.5q}) is no longer valid for  $\mathbb{G}$. In order to find a barrier function when the domain is the whole space,
our strategy here is to establish the following estimate
 \begin{equation}\label{1.5}
 \mathbb{G}[I_\alpha[\Phi_0^p]\Phi_0^q]\le c_1\Phi_0\quad {\rm in}\quad \R^N\setminus\{0\},
 \end{equation}
where $c_1>0$ and  $\Phi_0$ satisfies that
$$-\Delta u+\frac14 u=\delta_0\quad{\rm in}\quad \R^N.$$

Recently, M.Ghergu and S.D. Taliaferro \cite{GT} have studied the behavior near the origin in $\R^n$
for the Choquard-Pekar type inequality
\begin{equation}\label{1.9}
0\le -\Delta u\le (|x|^{-\alpha} \ast u^\lambda) u^\sigma \quad{\rm in}\quad B_2(0)\setminus \{0\}.
\end{equation}
Here $u$ is assumed to be in $C^2(\R^n \setminus \{0\}) \cap L^{\lambda}(\R^n)$ and
 $\ast$ is the convolution operator. In particular, they proved that for some suitable range of $\lambda, \sigma$ depending on $n$ and $\alpha$, the existence of pointwise bounds for nonnegative solutions of (\ref{1.9}). We mention that the nonnegative solutions they considered are superharmonic functions, and the operator $-\Delta + id$ in our case make a great difference on the analysis of the singularities and the existence of singular solutions.

We emphasize that in this paper we consider the case where $q \geq 1$.
When $q<1$,  \cite{MVS,MVS1} show that the solutions of problem (\ref{eq 1.1}) may have polynomial decay at infinity,  which makes the classification of singularities  of (\ref{eq 1.1}) difficult and interesting. In fact, the polynomial can not guarantee that $I_\alpha[u^p] $ is well defined and then it may cause the nonexistence. The existence and nonexistence of isolated singular solution of (\ref{1.1}) when $q \in (0,1)$ is considered in
\cite{CZ}.

The rest of this paper is organized as follows. In Section 2, we  show the integrability of the solutions for equation (\ref{eq 1.1}) and the singularity of the functions generated by
the fundamental solution of $-\Delta u+u=\delta_0$ in $\R^N$.   Section 3 is devoted
to the classification of the singularities of positive solutions for (\ref{eq 1.1}) and in Section 4, we search the weak solutions of (\ref{eq 1.2}) in the subcritical case.
\vskip0.5cm

 \section{ Preliminary }

We start the analysis from the integrability of the solutions to (\ref{eq 1.1}) near the origin. In what follows,  we denote by $c_i$ a generic  positive constant.

 \begin{lemma}\label{lm 2.2}
 Assume that $N\ge3$, $\alpha\in(0,N)$, $p>0,$ $q>0$ and $u$ is a positive classical solution of (\ref{eq 1.1})
 such that $u\in L^p(\R^N)$.
  Then we have 
\begin{equation}\label{lp}
 I_\alpha[u^p]u^q\in L^1_{loc}(\R^N).
\end{equation}

 \end{lemma}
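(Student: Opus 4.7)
The plan is to split the desired local integral into a contribution from away from the origin, which is elementary, and a contribution from a small ball $B_\delta$, which requires the PDE.

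Because $u\in L^p(\R^N)$ gives $u^p\in L^1(\R^N)$ and the Riesz kernel $|z|^{\alpha-N}$ is locally integrable ($N-\alpha<N$), Tonelli's theorem yields
$$\int_K I_\alpha[u^p]\,dx\le\Bigl(\sup_{y\in\R^N}\int_K|x-y|^{\alpha-N}dx\Bigr)\|u\|_p^p<\infty$$
for every compact $K\subset\R^N$, so $I_\alpha[u^p]\in L^1_{\loc}(\R^N)$. Since $u\in C^2(\R^N\setminus\{0\})$ and $u(x)\to 0$ as $|x|\to\infty$, $u$ is uniformly bounded on $\{|x|\ge\delta\}$ for each $\delta>0$; hence for every $0<\delta<R$,
$$\int_{B_R\setminus B_\delta}I_\alpha[u^p]u^q\,dx\le\|u\|_{L^\infty(\{|x|\ge\delta\})}^q\int_{B_R}I_\alpha[u^p]\,dx<\infty.$$
It therefore remains to bound $\int_{B_\delta}I_\alpha[u^p]u^q\,dx$ for some small $\delta>0$.

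For the near-origin part, I will exploit the pointwise identity $-\Delta u+u=I_\alpha[u^p]u^q$ valid on $\R^N\setminus\{0\}$. Integrating this over the annulus $B_\delta\setminus\overline{B_\epsilon}$ (where $u$ is $C^2$) and applying the divergence theorem produces
$$\int_{B_\delta\setminus B_\epsilon}I_\alpha[u^p]u^q\,dx=\int_{B_\delta\setminus B_\epsilon}u\,dx+|S^{N-1}|\,\epsilon^{N-1}\bar u'(\epsilon)-\int_{\partial B_\delta}\partial_r u\,dS,$$
where $\bar u(r)=|S^{N-1}|^{-1}\int_{S^{N-1}}u(r\omega)\,d\omega$ is the spherical mean. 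The last term is a fixed finite constant, and $\int_{B_\delta}u\,dx<\infty$ follows from $u\in L^p(\R^N)$ combined with boundedness of $u$ on $B_\delta\setminus B_{\delta/2}$. The left-hand side is monotone non-decreasing as $\epsilon\downarrow 0$ by nonnegativity of the integrand, so the proof reduces to showing $\limsup_{\epsilon\downarrow 0}\epsilon^{N-1}\bar u'(\epsilon)<\infty$.

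The principal obstacle is this final uniform bound, and I would address it by a contradiction argument using the radial form of the equation. Averaging the PDE gives the ODE $(r^{N-1}\bar u')'=r^{N-1}(\bar u-\bar F)$ on $(0,\delta)$, where $\bar F\ge 0$ is the spherical mean of $I_\alpha[u^p]u^q$. Integrating this identity and using $\int_0^\delta s^{N-1}\bar u(s)\,ds\le C\int_{B_\delta}u\,dx<\infty$, one sees that if $\int_{B_\delta}I_\alpha[u^p]u^q\,dx=+\infty$ then $\Phi(r):=r^{N-1}\bar u'(r)\to+\infty$ as $r\to 0^+$. But then for any $M>0$ there exists $r_M\in(0,\delta)$ with $\bar u'(s)\ge Ms^{1-N}$ for $s\in(0,r_M]$, giving
$$\bar u(r_M)-\bar u(r)=\int_r^{r_M}\bar u'(s)\,ds\ge\frac{M}{N-2}\bigl(r^{2-N}-r_M^{2-N}\bigr)\to+\infty\quad\text{as}\ r\to 0^+,$$
which forces $\bar u(r)\to-\infty$, contradicting $\bar u\ge 0$. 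Monotone convergence then yields $I_\alpha[u^p]u^q\in L^1_{\loc}(\R^N)$; the main subtlety lies in establishing this spherical-average dichotomy without circular dependence on the sought-after bound, and crucially uses both the positivity of $I_\alpha[u^p]u^q$ and of $u$.
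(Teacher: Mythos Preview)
Your approach via spherical averaging and the radial ODE for $\bar u$ is genuinely different from the paper's, which argues by comparison: assuming $I_\alpha[u^p]u^q\notin L^1_{\loc}$, the paper solves $(-\Delta+1)w_n=\chi_{B_1\setminus B_{R_n}}I_\alpha[u^p]u^q$ in $\R^N$ for $R_n\downarrow 0$, observes that $u+\Gamma_0\ge w_n$ on $\R^N\setminus\{0\}$ by the maximum principle (since $u+\Gamma_0\to+\infty$ at the origin and both vanish at infinity), and then derives $w_n(x)\to+\infty$ for each fixed $x\in B_1$ from the uniform lower bound on the Green kernel, a contradiction. Notably, this route never needs $u\in L^1_{\loc}$.

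Your argument does, and that is where the gap lies. You claim $\int_{B_\delta}u\,dx<\infty$ ``follows from $u\in L^p(\R^N)$ combined with boundedness of $u$ on $B_\delta\setminus B_{\delta/2}$,'' but this inference is valid only for $p\ge 1$ (via H\"older). The lemma allows any $p>0$, and for $0<p<1$ one has $L^p(B_\delta)\not\subset L^1(B_\delta)$ (e.g.\ $|x|^{-N}\in L^p(B_1)\setminus L^1(B_1)$); boundedness on the annulus says nothing about integrability on the inner ball $B_{\delta/2}$. Without $u\in L^1(B_\delta)$ you cannot conclude $\Phi(r)\to+\infty$ from your identity, since both $\int_r^\delta s^{N-1}\bar F\,ds$ and $\int_r^\delta s^{N-1}\bar u\,ds$ may diverge with an indeterminate difference. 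The gap is repairable (one can first show $u\in L^1_{\loc}$ from positivity and the supersolution inequality $-\Delta u+u\ge 0$), but as written the step is unjustified for $0<p<1$. Tellingly, the paper itself deduces $u\in L^1$ only \emph{after} this lemma, in Proposition~\ref{pr 2.4}, precisely by combining $I_\alpha[u^p]u^q\in L^1_{\loc}$ with the lower bound $I_\alpha[u^p]\ge c>0$ near the origin.
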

\begin{proof}
 If $I_\alpha[u^p]u^q\not\in L^1_{loc}(\R^N)$, then
$$\lim_{r\to0^+}\int_{B_1(0)\setminus B_r(0)} I_\alpha[u^p]u^q\,dx=+\infty$$
by the facts that $u\ge 0$ and   $u\in L^\infty_{loc}(\R^N\setminus\{0\})$.
Thus there exists a decreasing sequence $\{R_n\}_n\subset(0,1)$ such that
$\lim_{n\to\infty}R_n=0$ and
\begin{equation}\label{2.0}
 \int_{B_1(0)\setminus B_{R_n}(0)} I_\alpha[u^p]u^q\,dx=n.
\end{equation}
Let $w_n$ be the solution of
$$
  \arraycolsep=1pt
\begin{array}{lll}
  \displaystyle   -\Delta    w_n+w_n=\chi_{B_1(0)\setminus B_{R_n}(0)}I_\alpha[u^p]u^q\quad
 &{\rm in}\quad \R^N,\\[2mm]
 \phantom{ \ }
 \displaystyle   \lim_{|x|\to+\infty}w_n(x)=0,
\end{array}
$$
where $\chi_{D}$ denotes the standard characteristic function of a domain $D$.
Let $\Gamma_0$ be the fundamental solution of $-\Delta+id$, then
$$\lim_{|x|\to0^+} (u+\Gamma_0)(x)=+\infty\quad {\rm and}\quad \lim_{|x|\to+\infty} (u+\Gamma_0)(x)=0,$$
so it follows by Comparison Principle that for any $n\in\N$,
\begin{equation}\label{2.1}
u+\Gamma_0\ge w_n\quad{\rm in}\ \ \R^N\setminus\{0\}.
\end{equation}
Note that   $G(x,y)\ge c_3$ for any $x,y\in B_1(0)$, then by (\ref{2.0}), we have that
\begin{eqnarray*}
w_n(x) &=& \mathbb{G}[\chi_{B_1(0)\setminus B_{R_n}(0)} I_\alpha[u^p]u^q ](x)=\int_{B_1(0)\setminus B_{R_n}(0)} G(x,y) I_\alpha[u^p](y)u(y)^qdy \\
  &\ge&  c_3\int_{ B_1(0)\setminus B_{R_n}(0)} I_\alpha[u^p](y)u(y)^q dy\ \ = c_3n\\[1.5mm]
  &\to&+\infty \quad{\rm as}\ \ n\to\infty,\quad \ \forall x\in B_1(0),
\end{eqnarray*}
which, together with (\ref{2.1}),  implies that
$u+\Gamma_0\equiv+\infty$ in $B_1(0)$ and this is impossible.
Therefore we have that $ I_\alpha[u^p]u^q\in L^1_{loc}(\R^N)$.
\end{proof}

The following asymptotic behavior of  positive solutions to  problem (\ref{eq 1.1}) plays an important role
in the control of the integrability at infinity.

 \begin{lemma}\label{lm 2.1}
 Assume that $N\ge3$, $\alpha\in(0,N)$, $p>0,$ $q\ge 1$ and $u$ is a positive classical solution of (\ref{eq 1.1})
  such that $u\in L^p(\R^N)$.
  Then $I_\alpha[u^p]\in L^\infty(\R^N\setminus B_1(0))$,
  \begin{equation}\label{2.1.1}
\lim_{|x|\to+\infty} I_\alpha[u^p](x)=0
  \end{equation}
and  for any $\theta\in(0,1)$,  there holds
\begin{equation}\label{2.01}
\lim_{|x|\to+\infty} u(x)  e^{\theta |x|}=0.
\end{equation}

 \end{lemma}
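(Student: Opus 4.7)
The plan is to handle the two assertions about $I_\alpha[u^p]$ first, using only $u^p\in L^1(\R^N)$ together with the already-known fact that $u$ vanishes at infinity, and then to feed the decay of $I_\alpha[u^p]$ back into the equation to upgrade the qualitative decay of $u$ into genuine exponential decay by a supersolution comparison in an exterior domain.

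For the first two assertions I would fix $|x|\ge 2$ and split
$$I_\alpha[u^p](x)=\int_{|x-y|\le 1}\frac{u(y)^p}{|x-y|^{N-\alpha}}\,dy+\int_{|x-y|>1}\frac{u(y)^p}{|x-y|^{N-\alpha}}\,dy.$$
The far part is dominated by $\|u\|_{L^p}^p$. For the near part, $|x-y|\le 1$ with $|x|\ge 2$ forces $|y|\ge 1$; since $u\in C(\R^N\setminus\{0\})$ with $u\to 0$ at infinity, $u$ is uniformly bounded on $\R^N\setminus B_1(0)$, so the near part is bounded by a constant times $\int_{|z|\le 1}|z|^{\alpha-N}\,dz<\infty$. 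This yields $I_\alpha[u^p]\in L^\infty(\R^N\setminus B_1(0))$. For (\ref{2.1.1}), given $\e>0$ I would pick $R$ so large that both $\int_{|y|>R}u^p\le\e$ and $\sup_{|y|>R}u(y)\le\e$. Then for $|x|\gg R$ the contribution from $|y|\le R$ is bounded by $\|u\|_{L^p}^p/(|x|-R)^{N-\alpha}$, the piece over $\{|y|>R,\,|x-y|>1\}$ is $\le\e$, and the piece over $\{|y|>R,\,|x-y|\le 1\}$ is $\le c\e^p$. Letting $|x|\to\infty$ and then $\e\to 0$ gives $I_\alpha[u^p](x)\to 0$.

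To obtain (\ref{2.01}) I fix $\theta\in(0,1)$ and pick $\mu\in(\theta,1)$. Because $q\ge 1$, the product $I_\alpha[u^p](x)\,u(x)^{q-1}$ tends to $0$ as $|x|\to\infty$: when $q=1$ this is exactly (\ref{2.1.1}), and when $q>1$ it follows by combining (\ref{2.1.1}) with the hypothesis $u\to 0$. Hence there is $R>1$ with $I_\alpha[u^p](x)u(x)^{q-1}\le 1-\mu^2$ for $|x|\ge R$. Rewriting (\ref{eq 1.1}) gives
$$-\Delta u+\mu^2 u=u\bigl(I_\alpha[u^p]u^{q-1}-(1-\mu^2)\bigr)\le 0 \quad\text{on } |x|\ge R.$$
A direct radial computation shows that $v(x):=e^{-\mu|x|}$ satisfies
$$-\Delta v+\mu^2 v=\frac{(N-1)\mu}{|x|}e^{-\mu|x|}\ge 0 \quad\text{on } \R^N\setminus\{0\},$$
so $v$ is a supersolution. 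Choosing $C>0$ with $Cv\ge u$ on $\partial B_R(0)$ and observing that both $u$ and $Cv$ vanish at infinity, the comparison principle on $\R^N\setminus\overline{B_R(0)}$ delivers $u(x)\le Ce^{-\mu|x|}$ for $|x|\ge R$; multiplication by $e^{\theta|x|}$ then yields the factor $e^{-(\mu-\theta)|x|}\to 0$.

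The main obstacle, and the precise reason the hypothesis $q\ge 1$ appears in the statement, is the borderline case $q=1$: there $u^{q-1}\equiv 1$ supplies no smallness, and the whole job of turning the nonlinear term into a small perturbation of the linear operator $-\Delta+1$ must be done by the decay of $I_\alpha[u^p]$ alone. This is exactly why the qualitative decay (\ref{2.1.1}) must be established before the exponential decay (\ref{2.01}) can be attacked. Apart from this point the argument is a routine integration-splitting followed by a standard sub/supersolution comparison in an exterior domain, the boundary condition at infinity being trivial because $u$ is already known to vanish there.
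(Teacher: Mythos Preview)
Your proof is correct and follows essentially the same approach as the paper: an integration splitting (near/far around $x$) to control $I_\alpha[u^p]$, followed by absorbing $I_\alpha[u^p]u^{q-1}$ into the linear part and comparing with the explicit supersolution $e^{-\mu|x|}$ on an exterior domain. The only cosmetic difference is in the proof of (\ref{2.1.1}): you use a three-region $\varepsilon$-argument, whereas the paper uses a single variable radius $r=r(|x|)\to\infty$ chosen to balance the near and far contributions; both are standard and equivalent in strength.
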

\begin{proof} For any $x\in \R^N\setminus B_1(0)$, we have that
\begin{eqnarray*}
I_\alpha[u^p](x) &=& \int_{\R^N}\frac{u(y)^p}{|x-y|^{N-\alpha}}dy   \\
    &=& \int_{\R^N\setminus B_{\frac 12}(x)}\frac{u(y)^p}{|x-y|^{N-\alpha}}dy+\int_{  B_{\frac 12}(x)}\frac{u(y)^p}{|x-y|^{N-\alpha}}dy
    \\&\le & \left(\frac 12\right)^{\alpha-N}\norm{u}^p_{L^p(\R^N)} +\norm{u}_{L^\infty(B_{\frac 12}(x))}^p\int_{  B_{\frac 12}(x)}\frac{1}{|x-y|^{N-\alpha}}dy
   \\&\le & \left(\frac 12\right)^{\alpha-N} \norm{u}^p_{L^p(\R^N)} + c_4 \norm{u}^p_{L^\infty(\R^N\setminus B_{\frac12}(0))},
\end{eqnarray*}
thus $I_\alpha[u^p]\in L^\infty(\R^N\setminus B_1(0))$.

Similarly, for $x\in \R^N\setminus B_2(0)$ and $r \in (0,\frac{|x|}{2})$ depending on $|x|$, which will be chosen later,
we have
\begin{eqnarray*}
I_\alpha[u^p](x)
    &\le & r^{\alpha-N}\norm{u}^p_{L^p(\R^N)} +\norm{u}_{L^\infty(B_r(x))}^p\int_{ B_r(x)}\frac{1}{|x-y|^{N-\alpha}}dy
    \\&\le & r^{\alpha-N} \norm{u}^p_{L^p(\R^N)} +r^\alpha \norm{u}^p_{L^\infty(\R^N\setminus B_{|x|-r}(0))}
      \\&\le & r^{\alpha-N} \norm{u}^p_{L^p(\R^N)} +r^\alpha \norm{u}^p_{L^\infty(\R^N\setminus B_{\frac{|x|}2}(0))}.
\end{eqnarray*}
Since
$$\lim_{|x|\to+\infty}\norm{u}_{L^\infty(\R^N\setminus B_{\frac{|x|}2}(0))}=0,$$
then
$r: =\min\{\norm{u}^{-\frac{p}{2\alpha}}_{L^\infty(\R^N\setminus B_{\frac{|x|}2}(0))},\, \frac{|x|}{4}\}\to +\infty$ as $|x|\to+\infty$,
and thus
$$\lim_{|x|\to+\infty}  r^{\alpha-N} \norm{u}^p_{L^p(\R^N)}=0\quad{\rm and}\quad\lim_{|x|\to+\infty} r^\alpha \norm{u}^p_{L^\infty(\R^N\setminus B_{\frac{|x|}2}(0))}=0, $$
which imply that (\ref{2.1.1}) holds.

Now for any $\theta'\in(\theta,1)$, since $q \geq 1$,
there exists $r_1>2$ such that
$$I_\alpha[u^p](x)u(x)^{q-1}\le  1- \theta'\quad {\rm for}\quad |x|\ge r_1$$
and
$$-\Delta   e^{-\theta'|x|} +  \theta' e^{-\theta'|x|}\ge 0, \quad \quad x\in\R^N\setminus B_{r_1}(0).$$
Then we have that
$$
 \arraycolsep=1pt
\begin{array}{lll}
 \displaystyle   -\Delta   u+  \theta' u \le 0\quad
  {\rm in}\quad  \mathbb{R}^N\setminus B_{r_1}(0),\\[2mm]
 \phantom{ -\, \,\, }
 \displaystyle   \lim_{|x|\to+\infty}u(x)=0.
\end{array}
$$
 It follows by Comparison Principle that
 $$u(x)\le c_5e^{-\theta'|x|}\quad{\rm for}\quad   |x|\ge r_1,$$
 which implies that (\ref{2.01}) holds.
\end{proof}

 When $q\ge 1$, exponential  decay of the solutions to equation (\ref{eq 1.1})  enables us to focus on the singularities at
the origin. Precisely, from Lemma \ref{lm 2.2} and Lemma \ref{lm 2.1}, we have the following conclusion.
\begin{proposition}\label{pr 2.4}
Under the assumptions of Lemma \ref{lm 2.1}, we have that
  $$I_\alpha[u^p]u^q\in  L^1(\R^N)\quad{\rm and}\quad  u\in L^1(\R^N).$$
Furthermore, if $u\in L^t (B_1(0))$ for $t\in[1,+\infty]$, then
$u\in L^t(\R^N)$.
\end{proposition}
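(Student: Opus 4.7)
The plan is to prove the three statements in turn by splitting each integral at $\partial B_1(0)$ and combining the local integrability of $f := I_\alpha[u^p]u^q$ from Lemma \ref{lm 2.2} with the decay estimates of Lemma \ref{lm 2.1}. For $I_\alpha[u^p]u^q \in L^1(\R^N)$, Lemma \ref{lm 2.2} handles the contribution over $B_1(0)$; on $\R^N \setminus B_1(0)$, Lemma \ref{lm 2.1} gives $I_\alpha[u^p]\in L^\infty$ while the exponential decay $u(x) \le C_\theta e^{-\theta|x|}$, together with $q \ge 1$, forces $u^q$ to decay exponentially, so the product is integrable there. The final implication $u \in L^t(B_1(0)) \Rightarrow u \in L^t(\R^N)$ is obtained by exactly the same splitting: the exponential decay yields $u \in L^\infty(\R^N\setminus B_1(0)) \cap L^1(\R^N\setminus B_1(0)) \subset L^t(\R^N\setminus B_1(0))$ for every $t \in [1,\infty]$.

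The main claim is $u \in L^1(\R^N)$. Again, exponential decay supplies the piece on $\R^N \setminus B_1(0)$, so everything rests on showing $u \in L^1(B_1(0))$. My strategy is to dominate $u$ near the origin by the sum of the Green's potential of the truncated source and a multiple of the fundamental solution $\Gamma_0$ of $-\Delta + id$. Set $v := \mathbb{G}[\chi_{B_2(0)} f]$ and $M := \sup_{\partial B_1(0)} u < \infty$; Fubini together with $\|\Gamma_0\|_{L^1(\R^N)} = 1$ gives $v \in L^1(\R^N)$, and $v$ is bounded on $\partial B_1(0)$. For any $C > 0$ the function $V := v + M + C\Gamma_0$ satisfies $-\Delta V + V = f + M \ge f$ on $B_1(0) \setminus \{0\}$ (using $-\Delta\Gamma_0 + \Gamma_0 = 0$ there), while $V \ge M \ge u$ on $\partial B_1(0)$. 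Mimicking the comparison argument in the proof of Lemma \ref{lm 2.2} on $B_1(0) \setminus B_\epsilon(0)$ and letting $\epsilon \to 0^+$ should give $u \le V$ on $B_1(0) \setminus \{0\}$; since $\Gamma_0(x) \sim c|x|^{2-N}$ as $|x|\to 0$ and $N\ge 3$ yield $\Gamma_0 \in L^1(B_1(0))$, this produces $u \in L^1(B_1(0))$.

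The delicate point is ensuring that the comparison principle can be invoked uniformly in $\epsilon$, i.e.\ that $V - u \ge 0$ on $\partial B_\epsilon(0)$ for some fixed large $C$ and all small $\epsilon > 0$. Since we have no pointwise growth estimate on $u$ at $0$ a priori, my plan is to first establish a crude bound of the form $u(x) \le C_1 |x|^{2-N}$ as $|x| \to 0$, exploiting that $f \ge 0$ makes $u$ a supersolution of $-\Delta w + w = 0$ on $\R^N\setminus\{0\}$, so that $u$ cannot be more singular than the fundamental solution; Harnack's inequality for $-\Delta + id$ on the dyadic annuli $\{2^{-k-1} < |x| < 2^{-k}\}$ combined with the summability coming from $u\in L^p(\R^N)$ supplies the auxiliary constants. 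Once this crude pointwise bound is in place, one selects $C$ large enough that $C\Gamma_0$ dominates $u - v - M$ on every $\partial B_\epsilon(0)$, and the comparison argument closes.
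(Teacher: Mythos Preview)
Your handling of $I_\alpha[u^p]u^q\in L^1(\R^N)$ and of the final $L^t$ implication is correct and matches the paper: split at $\partial B_1(0)$, use Lemma~\ref{lm 2.2} inside and the exponential decay from Lemma~\ref{lm 2.1} outside.

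For $u\in L^1(B_1(0))$, however, you are working far harder than needed, and the step you yourself flag as delicate is a genuine gap. The heuristic ``$u$ is a positive supersolution of $-\Delta w+w=0$, so it cannot be more singular than $\Gamma_0$'' is not delivered by the tools you cite. The weak Harnack inequality for supersolutions controls $\inf u$ from below by an $L^s$ average, not $\sup u$ from above; an upper pointwise bound on dyadic annuli would require the subsolution side of Harnack, which drags in $f=I_\alpha[u^p]u^q$ in a norm strictly stronger than $L^1$, and no such estimate is available at this point. The hypothesis $u\in L^p(\R^N)$ does not rescue this either, since $p>0$ may be arbitrarily small. Without the crude bound $u(x)\le C_1|x|^{2-N}$, your comparison $u\le V=v+M+C\Gamma_0$ on $B_1(0)\setminus B_\epsilon(0)$ cannot be closed uniformly in $\epsilon$, and the whole construction of $V$ becomes idle.

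The paper bypasses all of this with a two-line observation you have overlooked. Since $u$ is positive and continuous on the compact annulus $\overline{B_2(0)}\setminus B_{1/2}(0)$, it is bounded below there by some $c_6>0$; hence for every $x\in B_1(0)\setminus\{0\}$,
\[
I_\alpha[u^p](x)\ \ge\ \int_{B_2(0)\setminus B_{1/2}(0)}\frac{u(y)^p}{|x-y|^{N-\alpha}}\,dy\ \ge\ c_7>0.
\]
Consequently $u^q\le c_7^{-1}\,I_\alpha[u^p]u^q\in L^1(B_1(0))$ by Lemma~\ref{lm 2.2}, and since $q\ge1$, H\"older's inequality gives $u\in L^1(B_1(0))$ immediately. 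No comparison, no Green's potentials, no pointwise bound near the origin are needed.
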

\begin{proof}
 From Lemma \ref{lm 2.2}, we know that $I_\alpha[u^p]u^q\in  L^1_{loc}(\R^N)$ and by Lemma \ref{lm 2.1},
 we have that $I_\alpha[u^p]u^q\in  L^1(\R^N)$.
Since $u$ is a positive solution, then
$$u\ge c_6\quad{\rm in}\quad B_2(0)\setminus B_{\frac12}(0)$$
and for $x\in B_1(0)\setminus\{0\}$,
\begin{eqnarray}
 I_\alpha[u^p](x) &\ge & \int_{B_2(0)\setminus B_{\frac12}(0)} \frac{u(x)^p}{|x-y|^{N-\alpha}}dy  \label{3.1+1}
    \ge  c_7.
\end{eqnarray}
 Then
 \begin{eqnarray*}
   \int_{B_1(0)} u(x) dx &\le &|B_1(0)|^{1-\frac1q} \left(\int_{B_1(0)} u(x)^q dx\right)^\frac1q \\
     &\le & |B_1(0)|^{1-\frac1q} \left(\frac1{c_7} \int_{B_1(0)}I_\alpha[u^p] u^q dx \right)^\frac1q<+\infty,
 \end{eqnarray*}
 that is, $u\in L^1_{loc}(\R^N)$. By Lemma \ref{lm 2.1}, it implies that $u\in L^1(\R^N)$.

If   $u\in L^t (B_1(0))$ for $t\in[1,+\infty]$, it infers by Lemma \ref{lm 2.1} that $u\in L^t(\R^N)$.
\end{proof}

To tackle the singularity estimate (\ref{1.2}), we establish the following lemma.
\begin{lemma}\label{lm 2.3}
Assume that $\alpha,\tau\in(0,N)$ and  $V_\tau:\R^N\setminus\{0\}\to\R_+$
satisfies
$$ V_\tau(x)\le c_8|x|^{-\tau} \quad{\rm for}\ \ x\in B_2(0).$$
If $V_\tau\in   L^\infty (\R^N\setminus B_1(0))$, then for $x\in B_{\frac12}(0)\setminus\{0\}$,
\begin{equation}\label{4.2.6}
\mathbb{G}[ V_\tau](x)\le
\left\{ \arraycolsep=1pt
\begin{array}{lll}
 c_9|x|^{-\tau+2} \quad
 &{\rm if}\quad \tau>2,\\[2mm]
 \displaystyle   -c_9\log|x| \quad
 &{\rm if}\quad \tau=2,\\[2mm]
 c_9  \quad
 &{\rm if}\quad \tau<2.
\end{array}
\right.
\end{equation}
If $V_\tau\in   L^1(\R^N)$, then for $x\in B_{\frac12}(0)\setminus\{0\}$,
\begin{equation}\label{4.2.6-1}
I_\alpha[ V_\tau](x)\le
\left\{ \arraycolsep=1pt
\begin{array}{lll}
 c_9|x|^{-\tau+\alpha} \quad
 &{\rm if}\quad \tau>\alpha,\\[2mm]
 \displaystyle   -c_9\log|x| \quad
 &{\rm if}\quad \tau=\alpha,\\[2mm]
 c_9  \quad
 &{\rm if}\quad \tau<\alpha.
\end{array}
\right.
\end{equation}

\end{lemma}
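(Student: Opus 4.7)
The plan is to bound both convolution integrals by a standard dyadic decomposition about the origin and the point $x$. Fix $x\in B_{1/2}(0)\setminus\{0\}$ and split $\R^N$ into the near-origin ball $D_1=B_{|x|/2}(0)$, the intermediate annulus $D_2=B_{2|x|}(0)\setminus B_{|x|/2}(0)$ (which contains $x$), the middle region $D_3=B_2(0)\setminus B_{2|x|}(0)$, and the outer region $D_4=\R^N\setminus B_2(0)$. Throughout I would use that the Green kernel of $-\Delta+\mathrm{id}$ satisfies $G(x,y)\le c|x-y|^{2-N}$ near the diagonal with exponential decay at infinity, and that $1/|x-y|^{N-\alpha}$ is the pure Riesz kernel.

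For the Green-potential estimate \eqref{4.2.6}, on $D_1$ I use $|x-y|\ge |x|/2$ to get $G(x,y)\lesssim |x|^{2-N}$; the remaining integral $\int_{|y|<|x|/2}|y|^{-\tau}dy\lesssim |x|^{N-\tau}$ yields a contribution $\lesssim |x|^{2-\tau}$. On $D_2$ the bound $V_\tau(y)\lesssim |x|^{-\tau}$ holds since $|y|\asymp |x|$, and $\int_{|x-y|\le 3|x|}|x-y|^{2-N}dy\lesssim |x|^2$, again giving $\lesssim |x|^{2-\tau}$. On $D_3$, the inequality $|x-y|\ge |y|/2$ lets me bound the integrand by $|y|^{2-N-\tau}$, and polar integration gives $\int_{2|x|}^{2}r^{1-\tau}dr$, whose three asymptotic regimes $\tau>2$, $\tau=2$, $\tau<2$ produce precisely the three cases in \eqref{4.2.6}. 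Finally on $D_4$ the exponential decay of $G$ together with $V_\tau\in L^\infty(\R^N\setminus B_1(0))$ gives a uniformly bounded contribution.

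For the Riesz-potential estimate \eqref{4.2.6-1} the same geometric split works with $|x-y|^{\alpha-N}$ in place of $G$. Integrating over $D_1$ and over $D_2$ yields $\lesssim |x|^{\alpha-\tau}$ in each case by the same two computations as above, with $2$ replaced by $\alpha$. On $D_3$ the bound is $\int_{2|x|}^{2}r^{\alpha-\tau-1}dr$, producing the three-way split at the threshold $\tau=\alpha$. On $D_4$ the exponential decay of $G$ is unavailable, but since $|x|<1/2$ and $|y|\ge 2$ we have $|x-y|\ge 1$, so the Riesz kernel is uniformly bounded there and $V_\tau\in L^1(\R^N)$ closes the estimate.

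The main obstacle is essentially bookkeeping: one must keep track in the intermediate annulus of which of the three regimes ($\tau$ compared with the exponent $2$ or $\alpha$) dominates, and must ensure that the outer-region hypotheses ($L^\infty$ behavior at infinity for the Green case, $L^1$ globally for the Riesz case) are exactly what is required to absorb the contributions from $D_4$. No subtle point occurs; the only care needed is to be consistent about whether each region exploits the singularity of the kernel or the singularity of $V_\tau$, and to handle the borderline cases $\tau=2$ and $\tau=\alpha$ where the logarithm arises from $\int r^{-1}dr$.
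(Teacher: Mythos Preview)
Your argument is correct; both estimates follow cleanly from the four-region split you describe, and your treatment of the outer region $D_4$ correctly identifies why the hypothesis is $L^\infty$ for the Green case (exponential decay of $G$) but $L^1$ for the Riesz case (only polynomial decay of the kernel).

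The paper's own proof proceeds differently: rather than a dyadic decomposition, it splits only into $B_1(0)$ and its complement, handles the complement by the $L^\infty$ (respectively $L^1$) hypothesis, and on the inner ball performs the rescaling $y\mapsto |x|z$ to write
\[
\int_{B_1(0)}\frac{dy}{|x-y|^{N-2}\,|y|^{\tau}}
= |x|^{2-\tau}\int_{B_{1/|x|}(0)}\frac{dz}{|e_x-z|^{N-2}\,|z|^{\tau}},
\]
then bounds the integrand on the right by $c/(1+|z|^{N-2+\tau})$ and reads off the three regimes from the growth of $\int_{B_{1/|x|}} (1+|z|^{N-2+\tau})^{-1}\,dz$ as $|x|\to 0$. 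Your direct splitting is a bit longer but more elementary, avoiding the change of variables and the somewhat implicit pointwise bound $\tfrac{1}{|e_x-z|^{N-2}|z|^{\tau}}\lesssim \tfrac{1}{1+|z|^{N-2+\tau}}$; it also makes transparent that the logarithm comes specifically from the shell $D_3$. The paper's rescaling, on the other hand, packages the whole inner contribution into a single scale-invariant integral, which is tidier to write down.
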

\begin{proof} Since the Green's function $G(x,\cdot)$ decays exponentially, then for $x\in B_{\frac12}(0)\setminus\{0\}$, we have that
\begin{eqnarray*}
\mathbb{G}[ V_\tau](x)  &\le& c_{10}   \int_{B_1(0)}\frac1{|x-y|^{N-2}}\frac1{|y|^{\tau}}dy +c_{10}\norm{V_\tau}_{L^\infty(\R^N\setminus B_1(0))}  \\
    &=&   c_{10}  |x|^{2-\tau } \int_{B_{\frac{1}{|x|}}(0)}\frac1{|e_{x}-y|^{N-2}}\frac1{|y|^{\tau}}dy+c_{10}\norm{V_\tau}_{L^\infty(\R^N\setminus B_1(0))}
    \\&\le &c_{11} |x|^{2-\tau }\int_{B_{\frac{1}{|x|}}(0)} \frac{1}{1+|y|^{ N-2+\tau}} dy+c_{10}\norm{V_\tau}_{L^\infty(\R^N\setminus B_1(0))}
   \\&\le & \left\{ \arraycolsep=1pt
\begin{array}{lll}
 c_9|x|^{-\tau+2} \quad
 &{\rm if}\quad \tau>2,\\[2mm]
 \displaystyle   -c_9\log|x| \quad
 &{\rm if}\quad \tau=2,\\[2mm]
 c_9  \quad
 &{\rm if}\quad \tau<2,
\end{array}
\right.
\end{eqnarray*}
where $e_{x}=\frac{x}{|x|}$. This implies (\ref{4.2.6}).

Observing  that the Riesz potential decays polynomially,  it infers that for $x\in B_{\frac12}(0)\setminus\{0\}$,
\begin{eqnarray*}
I_\alpha[ V_\tau](x)  &\le& c_{10}   \int_{B_1(0)}\frac1{|x-y|^{N-\alpha}}\frac1{|y|^{\tau}}dy +c_{10}\norm{V_\tau}_{L^1(\R^N)}  \\
    &=&   c_{10}   |x|^{\alpha-\tau } \int_{B_{\frac{1}{|x|}}(0)}\frac1{|e_{x}-y|^{N-\alpha}}\frac1{|y|^{\tau}}dy+c_{10}\norm{V_\tau}_{L^1(\R^N)}
    \\&\le &c_{12} |x|^{\alpha-\tau }\int_{B_{\frac{1}{|x|}}(0)} \frac{1}{1+|y|^{ N-\alpha+\tau}} dy+c_{10}\norm{V_\tau}_{L^1(\R^N)}
   \\&\le & \left\{ \arraycolsep=1pt
\begin{array}{lll}
 c_9|x|^{-\tau+\alpha} \quad
 &{\rm if}\quad \tau>\alpha,\\[2mm]
 \displaystyle   -c_9\log|x| \quad
 &{\rm if}\quad \tau=\alpha,\\[2mm]
 c_9  \quad
 &{\rm if}\quad \tau<\alpha.
\end{array}
\right.
\end{eqnarray*}
This ends the proof.\end{proof}

Applying Lemma \ref{lm 2.3},
we have the following corollary about
 the estimates for  $\mathbb{G}[(I_\alpha[V_\tau^p])^t]$ and $\mathbb{G}[(V_\tau^q)^t]$.

\begin{corollary}\label{lm 2.4}
Let $\alpha\in(0,N)$, $p\in(0,\frac{N}{N-2}),\,q\in(1,\frac{N}{N-2})$
 and $V_\tau(x)\le c_8|x|^{-\tau}$ for $x\in B_2(0)$ with $\tau\in(0,N-2]$.

If $V_\tau\in   L^{p}(\R^N\setminus B_1(0))\cap L^\infty (\R^N\setminus B_1(0))$ and $(p\tau-\alpha)t<N$, then for $x\in B_{\frac12}(0)\setminus\{0\}$,
\begin{equation}\label{2.1.3}
\mathbb{G}[(I_\alpha[V_\tau^p])^t](x)\le
\left\{ \arraycolsep=1pt
\begin{array}{lll}
 c_{13}|x|^{t(\alpha-p\tau)+2} \quad
 &{\rm if}\quad \tau>\frac1{p}(\alpha+\frac 2t),\\[2mm]
 \displaystyle   - c_{13}\log|x| \quad
 &{\rm if}\quad \tau=\frac1{p}(\alpha+\frac 2t),\\[2mm]
 c_{13}  \quad
 &{\rm if}\quad \tau<\frac1{p}(\alpha+\frac 2t).
\end{array}
\right.
\end{equation}
If $V_\tau\in C(\R^N\setminus\{0\})\cap L^\infty (\R^N\setminus B_1(0)) $ and $\tau qt<N$, then for $x\in B_{\frac12}(0)\setminus\{0\}$,
\begin{equation}\label{2.1.4}
\mathbb{G}[(V_\tau^q)^t](x)\le
\left\{ \arraycolsep=1pt
\begin{array}{lll}
  c_{13}|x|^{-\tau qt+2} \quad
 &{\rm if}\quad \tau>\frac2{qt},\\[2mm]
 \displaystyle   - c_{13}\log|x| \quad
 &{\rm if}\quad \tau=\frac2{qt},\\[2mm]
 c_{13}  \quad
 &{\rm if}\quad \tau<\frac2{qt}.
\end{array}
\right.
\end{equation}
\end{corollary}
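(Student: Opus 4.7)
The strategy is to reduce both estimates to iterated applications of Lemma \ref{lm 2.3}. Each application of the Riesz potential $I_\alpha$ weakens a singularity of order $|x|^{-\tau'}$ to order $|x|^{\alpha-\tau'}$ (or produces a logarithm at the borderline $\tau'=\alpha$, or a bounded function when $\tau'<\alpha$), while each application of the Green operator $\mathbb{G}$ weakens a singularity by exponent $2$ in the same trichotomous way. The conclusions of the corollary are exactly what results from composing these two operations for (\ref{2.1.3}), and from a single such operation for (\ref{2.1.4}).

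For (\ref{2.1.3}), the first step is to observe that $V_\tau^p(x)\le c_8^p |x|^{-p\tau}$ on $B_2(0)$ and that $V_\tau^p\in L^1(\R^N)$: the inner integrability follows from $p\tau\le p(N-2)<N$, and the outer one from $V_\tau\in L^p(\R^N\setminus B_1(0))$. Then Lemma \ref{lm 2.3} applied to $V_\tau^p$ with exponent $p\tau$ gives the three-case bound
\[
I_\alpha[V_\tau^p](x)\;\le\; c\,|x|^{\alpha-p\tau},\qquad -c\log|x|,\qquad \text{or}\qquad c,
\]
according to whether $p\tau$ exceeds, equals, or falls below $\alpha$. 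Raising to the $t$-th power yields a new function $W:=(I_\alpha[V_\tau^p])^t$ satisfying $W(x)\le c\,|x|^{-t(p\tau-\alpha)_+}$ on $B_2(0)$ (after absorbing any logarithmic factor into an arbitrarily small power), while $W\in L^\infty(\R^N\setminus B_1(0))$ follows from the Riesz-potential bound in the proof of Lemma \ref{lm 2.1} together with $V_\tau\in L^p\cap L^\infty(\R^N\setminus B_1(0))$. The hypothesis $(p\tau-\alpha)t<N$ is exactly what is needed to keep the new exponent in the admissible range $(0,N)$ of Lemma \ref{lm 2.3}, whose first part, applied to $W$ with exponent $\tau'=t(p\tau-\alpha)$, produces (\ref{2.1.3}) with threshold $\tau'=2$, i.e., $\tau=\frac{1}{p}(\alpha+\frac{2}{t})$.

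Estimate (\ref{2.1.4}) is immediate from a single application of the first part of Lemma \ref{lm 2.3}: we have $(V_\tau^q)^t(x)\le c\,|x|^{-\tau qt}$ on $B_2(0)$, the $L^\infty$-control on the exterior is given by hypothesis, and the condition $\tau qt<N$ places the new exponent in $(0,N)$. The trichotomy of Lemma \ref{lm 2.3} then yields (\ref{2.1.4}) with threshold $\tau qt=2$, i.e., $\tau=2/(qt)$. The only delicate point in the entire argument is the intermediate borderline $p\tau=\alpha$, where a logarithmic factor is generated; since the alternatives in (\ref{2.1.3}) are separated by strict inequalities in $\tau$, this logarithm is harmlessly absorbed into an arbitrarily small power of $|x|$ in the polynomial region and does not affect the announced bound.
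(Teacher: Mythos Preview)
Your proof is correct and follows essentially the same route as the paper: verify that $I_\alpha[V_\tau^p]$ is bounded on $\R^N\setminus B_1(0)$, then apply Lemma~\ref{lm 2.3} twice (once for $I_\alpha$ on $V_\tau^p$, once for $\mathbb{G}$ on the result raised to the power $t$). The paper's own proof is a terse two-line sketch that omits the intermediate trichotomy and the borderline $p\tau=\alpha$ case you spell out; your version simply fills in those details, including the standard absorption of the logarithm into $|x|^{-\epsilon}$, which the paper leaves implicit.
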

 \begin{proof}
   For $y\in \R^N\setminus B_1(0)$, we have that
\begin{eqnarray*}
I_\alpha[ V_\tau^p](y)  &\le& c_8^p  \int_{B_1(0)}\frac1{|y-z|^{N-\alpha}}\frac1{|z|^{p\tau}}dz+c_{10}\norm{V_\tau}^p_{L^p(\R^N\setminus B_1(0))}
\\[2mm]
&\le &c_8^p \norm{V_\tau}^p_{L^\infty(\R^N\setminus B_1(0))}+c_{10}\norm{V_\tau}_{L^1(\R^N\setminus B_1(0))},
\end{eqnarray*}
that is, $I_\alpha[V_\tau^p]\in {L^\infty(\R^N\setminus B_1(0))}$.  Now we apply Lemma \ref{lm 2.3}  to  obtain (\ref{2.1.3}).
It is similar to obtain (\ref{2.1.4}).
 \end{proof}

\medskip

\begin{lemma}\label{lm2.6}
Assume that  $\alpha\in(0,N)$,
 $p>0$,  $ q\ge 1$ and   $u\in L^1(\R^N)$ is a positive weak solution of (\ref{eq 1.2}) with $k\ge 0$.
  Then
$$
 u\ge k\Gamma_0\quad {\rm a.e.\ in}\quad \R^N,
$$
 where $\Gamma_0$ is the fundamental solution of $-\Delta+id$.

\end{lemma}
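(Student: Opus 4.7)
The natural strategy is to subtract off the Dirac contribution and exploit the strict positivity of the Green kernel. Set $v := u - k\Gamma_0$; the goal is to prove $v \ge 0$ a.e.\ in $\R^N$.

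First, I would verify that $v \in L^1(\R^N)$: since $\Gamma_0(x)$ behaves like $c|x|^{2-N}$ near the origin (integrable in dimension $N\ge 3$) and decays exponentially at infinity, we have $\Gamma_0 \in L^1(\R^N)$, hence $v \in L^1(\R^N)$. Next, I would test the weak formulation of (\ref{eq 1.2}) satisfied by $u$ against an arbitrary $\xi \in C_c^\infty(\R^N)$ and subtract $k$ times the defining identity $\int_{\R^N}\Gamma_0(-\Delta\xi+\xi)\,dx = \xi(0)$ for the fundamental solution. The $k\delta_0$ term cancels exactly, leaving
\begin{equation*}
\int_{\R^N} v(-\Delta \xi + \xi)\,dx \;=\; \int_{\R^N} I_\alpha[u^p]u^q\,\xi\,dx, \qquad \forall\,\xi\in C_c^\infty(\R^N),
\end{equation*}
so $v$ is an $L^1$ distributional solution of $-\Delta v + v = g$ on $\R^N$ with $g := I_\alpha[u^p]u^q \ge 0$, and $g \in L^1(\R^N)$ by the weak-solution hypothesis (consistent with Proposition \ref{pr 2.4}).

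The key step is then to identify $v$ with the Green-kernel convolution $\mathbb{G}[g]$. Both $v$ and $w := \mathbb{G}[g]$ lie in $L^1(\R^N)$ and satisfy the same distributional equation, so their difference $v-w \in L^1(\R^N)$ is a distributional solution of $-\Delta \phi + \phi = 0$ on $\R^N$. Because the Fourier symbol $1+4\pi^2|\xi|^2$ never vanishes, this forces $\phi\equiv 0$; equivalently, one may mollify $\phi$ and convolve with $\Gamma_0$ to reach the same conclusion. Hence $v=\mathbb{G}[g]$ almost everywhere, and since $G(x,y)>0$ throughout $\R^N\times\R^N$ and $g\ge 0$, we conclude
\begin{equation*}
v(x) \;=\; \int_{\R^N} G(x,y)\,g(y)\,dy \;\ge\; 0 \quad \text{for a.e.\ } x\in\R^N,
\end{equation*}
which is precisely $u\ge k\Gamma_0$ a.e.

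The main obstacle is the uniqueness/representation step: pinning down that an $L^1$ distributional solution of $-\Delta v + v = g$ on all of $\R^N$ must coincide with $\mathbb{G}[g]$ (so that one cannot add an extraneous ``$L^1$ harmonic'' component). This is standard---by Fourier analysis or a mollification argument using the exponential decay of $\Gamma_0$---but must be invoked explicitly. Once granted, the strict positivity of $G$ closes the proof immediately.
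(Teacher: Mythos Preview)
Your proposal is correct and reaches the same conclusion, but the route is genuinely different from the paper's. You subtract $k\Gamma_0$, then invoke \emph{uniqueness} of $L^1$ distributional solutions of $-\Delta v+v=g$ on $\R^N$ (via the nonvanishing Fourier symbol $1+|\xi|^2$) to identify $v=\mathbb{G}[g]$, and conclude from the strict positivity of the Green kernel. The paper instead argues by a direct \emph{duality/test-function} method: with $w=u-k\Gamma_0$ it plugs the (cut-off) test function $\xi_n(x)=\mathbb{G}[\mathrm{sign}(w_-)](x)\,\eta_0(x/n)$ into the weak formulation, observes that $\xi_n\le 0$ so the right-hand side $\int I_\alpha[u^p]u^q\,\xi_n\,dx\le 0$, and shows the left-hand side converges to $\int_{\R^N}|w_-|\,dx$ as $n\to\infty$ (the commutator terms with the cutoff are $O(1/n)$ because $w\in L^1$), forcing $w_-=0$ a.e. Your argument is shorter and more conceptual, leaning on a clean Fourier fact; the paper's is more hands-on and self-contained, essentially a Kato-type inequality adapted to $-\Delta+\mathrm{id}$ on $\R^N$, which avoids any appeal to tempered-distribution theory. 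Both need $I_\alpha[u^p]u^q\in L^1(\R^N)$, which the paper also takes for granted in its proof (and which holds in every application of the lemma).
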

\begin{proof}
Let $w=u-k\Gamma_0$, then $w$ is a weak solution of
$$
\arraycolsep=1pt
\begin{array}{lll}
 \displaystyle\ \, -\Delta w+w=I_\alpha[u^p]u^q\quad
 &{\rm in}\quad \R^N,\\[2mm]
 \phantom{  }
 \displaystyle   \lim_{|x|\to+\infty}w(x)=0,
\end{array}
$$
in the distribution sense, that is
$$\int_{\R^N}w(-\Delta \xi+ \xi) dx=\int_{\R^N}I_\alpha[u^p]u^q \xi dx,\qquad\forall\, \xi\in C^{\infty}_c(\R^N).$$

For any  $n\in \N$, denote
$$\xi_{n}(x)=\mathbb{G}[{\rm sign}(w_-)](x)\eta_0(\frac xn),\qquad\forall\, x\in\R^N,$$
where $t_-=\min\{t,0\}$ and $\eta_0:\R^N\to [0,1]$ is a $C^\infty$-function with the support in $B_2(0)$ and
 satisfying $\eta_0=1$ in $B_1(0)$, then  $\xi_{n}\in C^{\infty}_c(\R^N)$ for any $n\in\N$. Thus,
 we have that
 \begin{eqnarray*}
\int_{\R^N}&w&(-\Delta \xi_n+\xi_n)  dx = \int_{\R^N}w(x)  {\rm sign}(w_-)(x) \eta_0(\frac xn)\, dx
+\\&&\int_{\R^N} w(x) \nabla \mathbb{G}[{\rm sign}(w_-)](x)\cdot \nabla \eta_0(\frac xn)\,dx+
\int_{\R^N} w(x)   \mathbb{G}[{\rm sign}(w_-)](x) (-\Delta)\eta_0(\frac xn)\,dx,
 \end{eqnarray*}
 where
 $$
  \int_{\R^N}w(x)  {\rm sign}(w_-)(x) \eta_0(\frac xn) \,dx=-\int_{\R^N}w_-(x)\eta_0 (\frac xn)\,dx\ge \int_{B_n(0)}|w_-(x)|\,dx,
$$
 \begin{eqnarray*}
 \left|\int_{\R^N} w(x) \nabla \mathbb{G}[{\rm sign}(w_-)](x)\cdot \nabla \eta_0(\frac xn)\,dx\right|  &\le & \frac {c_{14}}n\int_{B_{2n}(0)\setminus B_n(0)} |w(x)|\, dx,
 \end{eqnarray*}
 and
\begin{eqnarray*}
 \left| \int_{\R^N} w (x)  \mathbb{G}[{\rm sign}(w_-)] (x)(-\Delta)\eta_0(\frac xn)\,dx \right| &\le &
 \frac {c_{14}}{n^2}\int_{B_{2n}(0)\setminus B_n(0)} |w(x)|\, dx.
 \end{eqnarray*}
Since $I_\alpha[u^p]u^q\in L^1(\R^N)$ and $\xi_{n}$ is non-positive in $\R^N$, we have
\begin{eqnarray*}
\int_{\R^N}I_\alpha[u^p]u^q \xi_{n} dx  \le   0,
\end{eqnarray*}
which implies that
\begin{eqnarray*}
\int_{B_{n}(0)}|w_-(x)|dx  &\le &   \left| \int_{\R^N} w (x)  \mathbb{G}[{\rm sign}(w_-)] (x)(-\nabla)\eta_0(\frac xn)\,dx \right|
\\&&+\left| \int_{\R^N} w(x)   \mathbb{G}[{\rm sign}(w_-)](x) (-\Delta)\eta_0(\frac xn)\,dx \right|.
\end{eqnarray*}
Therefore by taking $n\to\infty$, we obtain that
 $$w_-=0\quad{\rm a.e.\ in}\ \ \R^N,$$
that is,
 $u -k\Gamma_0\ge0$ a.e. in $\R^N$.
\end{proof}
\vskip0.5cm

\section{Classification of singularities}

In this section, we classify the singularities of positive solutions to equation (\ref{eq 1.1}).

\begin{proposition}\label{pr 2.1}
Assume that $N\ge 3$, $\alpha\in(0,N)$,
 $p>0$,  $ q\ge 1$
 and $u$ is a positive classical solution of (\ref{eq 1.1}) satisfying $u\in L^p(\R^N)$.
 Then $u$ is a weak solution of (\ref{eq 1.2}) for some $k\ge0$.
Furthermore, if $(p,\,q)$ satisfies (\ref{1.3}),
then $k=0$.
\end{proposition}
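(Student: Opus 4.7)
The plan is in two parts: first establish the distributional identity (\ref{1.1}) with some $k\ge 0$, and then show $k=0$ under the supercritical hypothesis (\ref{1.3}).

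For the first part, my approach follows the Brezis--Lions philosophy. By Proposition \ref{pr 2.4}, both $u$ and $f := I_\alpha[u^p]u^q$ lie in $L^1(\R^N)$, so $w := \mathbb{G}[f]\in L^1(\R^N)$ solves $-\Delta w+w=f$ distributionally on $\R^N$. The difference $v:=u-w\in L^1(\R^N)$ satisfies $-\Delta v+v=0$ classically on $\R^N\setminus\{0\}$, so the distribution $(-\Delta+1)v$ on $\R^N$ is supported at the origin. By the structure theorem for distributions supported at a point, it equals $\sum_{|\beta|\le m} c_\beta D^\beta\delta_0$ for some finite $m$. Convolving with $\Gamma_0$ yields $v=\sum c_\beta D^\beta\Gamma_0$; since $|D^\beta\Gamma_0(x)|\sim|x|^{2-N-|\beta|}$ near the origin, local $L^1$ integrability of $v$ forces $|\beta|\le 1$. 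A first-order derivative term would introduce a sign-changing singularity of order $|x|^{1-N}$ that dominates the at most $|x|^{2-N+\varepsilon}$ singularity of $w$ (controlled by $f\in L^1$), and would drive $u=v+w$ negative along some ray, contradicting $u\ge 0$. Hence $v=k\Gamma_0$, and the same positivity consideration gives $k\ge 0$. This establishes (\ref{1.1}).

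For the second part, Lemma \ref{lm2.6} gives $u\ge k\Gamma_0$, so $u(x)\ge c_{N}k|x|^{2-N}(1+o(1))$ as $x\to 0$. Assume $k>0$ and examine the three conditions of (\ref{1.3}). If $p\ge N/(N-2)$, then $u^p\ge c|x|^{-p(N-2)}$ near the origin is not locally integrable, contradicting $u\in L^p(\R^N)$. If $q\ge N/(N-2)$, estimate (\ref{3.1+1}) gives $I_\alpha[u^p]\ge c_7$ on $B_1(0)$, so $I_\alpha[u^p]u^q\ge c|x|^{-q(N-2)}$ fails local integrability, contradicting Proposition \ref{pr 2.4}. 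In the remaining case $p+q\ge(N+\alpha)/(N-2)$ with $p,q<N/(N-2)$, the hypotheses force $\alpha<p(N-2)<N$, so a direct Riesz-potential estimate gives $I_\alpha[u^p](x)\ge c|x|^{\alpha-p(N-2)}$ for small $|x|$; combined with $u^q\ge c|x|^{-q(N-2)}$ this yields $I_\alpha[u^p]u^q\ge c|x|^{\alpha-(p+q)(N-2)}$, which is not locally integrable since $(p+q)(N-2)\ge N+\alpha$, again contradicting Proposition \ref{pr 2.4}.

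I expect the first part to be the main obstacle, in particular ruling out the first-order derivative contributions from the distributional source and verifying $k\ge 0$, as this relies on a delicate pointwise sign analysis. A more robust alternative is the direct Brezis--Lions cutoff argument: take $\eta_\epsilon\in C_c^\infty(\R^N)$ vanishing on $B_\epsilon(0)$ and equal to $1$ outside $B_{2\epsilon}(0)$, test (\ref{eq 1.1}) against $\xi\eta_\epsilon$ for $\xi\in C_c^\infty(\R^N)$, and analyze the boundary contribution $\int u(\xi\Delta\eta_\epsilon+2\nabla\xi\cdot\nabla\eta_\epsilon)\,dx$ as $\epsilon\to 0$. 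The limit is a distribution supported at the origin, and positivity of $u$ together with the annular structure of the support of $\nabla\eta_\epsilon$ forces it to reduce to $k\xi(0)$ with $k\ge 0$.
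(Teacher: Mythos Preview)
Your treatment of the supercritical case is essentially the paper's: after Lemma~\ref{lm2.6} gives $u\ge k\Gamma_0$, the same three cases lead to the same non-integrability contradictions with Proposition~\ref{pr 2.4} (your variant for $p\ge N/(N-2)$, contradicting $u\in L^p$ directly rather than showing $I_\alpha[\Gamma_0^p]\equiv+\infty$ as the paper does, is equally valid and arguably more direct).

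For the first part, the paper takes a shorter path than either of your proposals: it defines the functional $L(\xi)=\int_{\R^N}\bigl(u(-\Delta\xi+\xi)-I_\alpha[u^p]u^q\xi\bigr)\,dx$, checks by integration by parts that $L(\xi)=0$ whenever $\xi$ is supported away from the origin, and then invokes Lions' result \cite[Theorem~1.1]{L} to conclude $L=k\delta_0$ with $k\ge0$. Your alternative cutoff argument is precisely what underlies that citation, so that route is sound.

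Your primary approach, however, has a genuine gap where you exclude the first-order contributions $\partial_i\Gamma_0$. The assertion that $w=\mathbb{G}[f]$ has singularity ``at most $|x|^{2-N+\varepsilon}$, controlled by $f\in L^1$'' is not justified: membership $f\in L^1$ gives no such pointwise bound on $\mathbb{G}[f]$ near the origin. One can build nonnegative $f\in L^1(\R^N)$ (for instance a sum of tall bumps $a_n\mathbf{1}_{B_{r_n}(x_n)}$ with $|x_n|\to0$, $r_n\ll|x_n|$, $\sum a_n r_n^N<\infty$ but $a_n r_n^2\gg|x_n|^{2-N}$) for which $|x_n|^{N-2}\,\mathbb{G}[f](x_n)\to\infty$. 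Thus from $f\in L^1$ alone you cannot guarantee that $w$ is $o(|x|^{1-N})$ along every ray, which is what your sign comparison needs; the same issue recurs when you argue $k\ge0$. Positivity of $u$ together with the inequality $-\Delta u+u\ge0$ in the punctured ball are exactly the inputs that the Brezis--Lions lemma exploits to circumvent this, so you should run the cutoff computation you sketch at the end (or simply cite \cite{L} as the paper does) rather than rely on the convolution comparison.
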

\begin{proof}
From Proposition \ref{pr 2.4}, we know that $I_\alpha[u^p]u^q\in L^1(\R^N)$ and $u\in L^1(\R^N)$. Define the operator $L$  by the following
\begin{equation}\label{3.1}
L(\xi):=\int_{\R^N} [u(-\Delta \xi+\xi) -I_\alpha[u^p]u^q\xi]\,dx,\quad \forall\xi\in C^\infty_c(\R^N).
\end{equation}
First we claim that for any $\xi\in C^\infty_c(\R^N)$ with
the support in $\R^N\setminus\{0\}$,
$$L(\xi)=0.$$
In fact, since $\xi\in C^\infty_c(\R^N)$ has the support in $\R^N\setminus\{0\}$, then there exists $r\in(0,1)$ such that
$\xi=0$ in $B_r(0)\cup (\R^N\setminus B_{\frac1r}(0))$ and  then
\begin{eqnarray*}
L(\xi)
&=&  \int_{B_{\frac1r}(0)\setminus B_r(0)}[ u( -\Delta  \xi+\xi)  -I_\alpha[u^p]u^q\xi]\,dx
\\&  = &  \int_{B_{\frac1r}(0)\setminus B_r(0)}(-\Delta u+u  -I_\alpha[u^p]u^q )\xi\,dx
\\&=&0.
\end{eqnarray*}
From Theorem 1.1 in \cite{L},    it implies that
\begin{equation}\label{S}
 L=  k \delta_0\quad {\rm for\ some\ }\ k\ge0,
\end{equation}
 that is,
\begin{equation}\label{3.3}
L(\xi)= \int_{\R^N} \left[u(-\Delta \xi+\xi) -I_\alpha[u^p]u^q\xi\right]\,dx=  k \xi(0),\quad  \quad \forall \xi\in C^\infty_c (\R^N).
\end{equation}
Then $u$ is a weak solution of (\ref{eq 1.2}) for some $k\ge0$.

 Next we prove that  $k=0$ if $(p,q)$ satisfies (\ref{1.3}).  By contradiction, if $k>0$, then Lemma \ref{lm2.6} implies that
 $$u\ge k\Gamma_0\quad {\rm in}\quad B_1(0)\setminus\{0\}.$$
It is well known that
 $$\Gamma_0(x)\ge c_{15}|x|^{2-N}, \qquad \forall x\in B_1(0)\setminus\{0\}.$$

{\it Case I: $p+q\ge \frac{N+\alpha}{N-2}$.}
For $x\in B_1(0)\setminus\{0\}$, we have that
 \begin{eqnarray*}
I_\alpha[u^p]u^q (x)&\ge & k^{p+q} I_\alpha[\Gamma_0^p]\Gamma_0^q(x)\\
    &>&  c_{15} k^{p+q} \bigg(\int_{B_1(0)}\frac{|y|^{(2-N)p}}{|x-y|^{N-\alpha}}\, dy\bigg)\, |x|^{(2-N)q}
    \\&=&c_{15} k^{p+q} \bigg(\int_{B_{\frac{1}{|x|}}(0)}\frac{|z|^{(2-N)p}}{|e_x-z|^{N-\alpha}}\, dz\bigg)\, |x|^{(2-N)(p+q)+\alpha}
   \\&\ge&c_{15} k^{p+q} \bigg(\int_{B_1(0)}\frac{|z|^{(2-N)p}}{|e_x-z|^{N-\alpha}}\, dz\bigg)\, |x|^{(2-N)(p+q)+\alpha},
 \end{eqnarray*}
 where $e_x=\frac{x}{|x|}$.
 But in {\it Case} $I$,
 the function $|\cdot|^{(2-N)(p+q)+\alpha}$ does not belong to $L^1_{loc}(\R^N)$.
This contradicts Lemma \ref{lm 2.2} and we have that $k=0$.

 \smallskip

 {\it Case II: $p\ge \frac{N}{N-2}$.} We note that
 $$\Gamma_0^p(x)\ge c_{15}^p|x|^{p(2-N)}\ge c_{15}^p |x|^{-N}\quad{\rm for}\quad 0<|x|<1,$$
 then $I_\alpha[\Gamma_0^p]\equiv +\infty$ in $B_1(0)\setminus\{0\}$ and then for $x\in B_1(0)\setminus\{0\}$,
we have that
 \begin{eqnarray*}
I_\alpha[u^p](x) \ge   k^{p+q} I_\alpha[\Gamma_0^p](x)=+\infty,
 \end{eqnarray*}
 which is impossible. Thus $k=0$.

  \smallskip

 {\it Case III: $q\ge \frac{N}{N-2}$.}  We note that $\Gamma_0^q(x)\ge c_{15}^q|x|^{q(2-N)}$ for $0<|x|<1$,
 where $q(2-N)\le -N$. It follows from (\ref{3.1+1}) that $I_\alpha[u^p]\ge c_7$ in $ B_1(0)\setminus\{0\}$, then
$$I_\alpha[u^p]u^q (x) \ge   c_{7}k^{q}\Gamma_0^q(x)\ge  c_{7}k^{q} c_{15}^q|x|^{q(2-N)}\ge  c_{7} c_{15}^qk^{q}|x|^{-N}\quad{\rm for}\ \ 0<|x|<1,$$
  which contradicts Lemma \ref{lm 2.2}. Therefore we have that $k=0$.
\end{proof}

Now we focus on the subcritical case.
\begin{proposition}\label{pr 2.2}
Assume that $N\ge 3$, $\alpha\in(0,N)$, (\ref{1.4}) holds for $p>0$, $ q\ge 1$ and
  $u$ is a positive classical solution of (\ref{eq 1.1}) satisfying $u\in L^p(\R^N)$. Assume more that $u$ is   a   weak solution of (\ref{eq 1.2}) with $k=0$.
Then $u$ is a classical solution of (\ref{eq 1.3}).

\end{proposition}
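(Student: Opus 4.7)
With $k=0$, the weak-solution identity collapses to
$$\int_{\R^N} u(-\Delta\xi + \xi)\,dx \;=\; \int_{\R^N} I_\alpha[u^p]\,u^q\,\xi\,dx, \qquad \forall\,\xi\in C^\infty_c(\R^N).$$
Since $u$ and $f:=I_\alpha[u^p]u^q$ both lie in $L^1(\R^N)$ by Proposition \ref{pr 2.4}, and since $-\Delta+\mathrm{id}$ is invertible on $L^1(\R^N)$ through the Green operator $\mathbb{G}$, one immediately concludes $u=\mathbb{G}[f]$ a.e. in $\R^N$. The proof is therefore reduced to showing that this representation is continuous (and, after a standard bootstrap, $C^2$) at the origin, because $u$ is already classical on $\R^N\setminus\{0\}$ and the limit at infinity is supplied by Lemma \ref{lm 2.1}.

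\textbf{A local $L^t$-bootstrap with balanced factors.} The plan is to push $f$ into $L^s_{loc}(\R^N)$ for some $s>N/2$, after which interior $L^s$-theory for $-\Delta+\mathrm{id}$ forces $u\in L^\infty_{loc}(\R^N)$. Starting from $u=\mathbb{G}[f]$ with $f\in L^1(\R^N)$ and the kernel bound $G(x,y)\le c|x-y|^{2-N}$ near the diagonal, one first obtains $u\in L^{t_0}_{loc}(\R^N)$ for every $t_0<\tfrac{N}{N-2}$. I then iterate three classical estimates. First, the Hardy-Littlewood-Sobolev inequality upgrades $u^p\in L^{t_n/p}_{loc}$ to $I_\alpha[u^p]\in L^{s_n}_{loc}$ with $\tfrac{1}{s_n}=\tfrac{p}{t_n}-\tfrac{\alpha}{N}$; next, H\"older's inequality gives $f\in L^{r_n}_{loc}$ with $\tfrac{1}{r_n}=\tfrac{p+q}{t_n}-\tfrac{\alpha}{N}$; finally, the Bessel-potential mapping property of $\mathbb{G}$ returns $u\in L^{t_{n+1}}_{loc}$ with
$$\frac{1}{t_{n+1}} \;=\; \frac{p+q}{t_n} - \frac{\alpha+2}{N}.$$
The side conditions $p,q<\tfrac{N}{N-2}$ in (\ref{1.4}) keep the HLS and H\"older exponents admissible throughout the loop (no intermediate index crosses the trivial threshold prematurely), while the principal subcritical inequality $p+q<\tfrac{N+\alpha}{N-2}$ is exactly equivalent to the fixed point $y^*:=\tfrac{\alpha+2}{N(p+q-1)}$ of the affine map $y\mapsto(p+q)y-\tfrac{\alpha+2}{N}$ exceeding $\tfrac{N-2}{N}$. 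Choosing $t_0$ close enough to $N/(N-2)$ that $y_0:=1/t_0<y^*$, the sequence $y_n$ is then strictly decreasing and crosses zero in finitely many steps, at which stage the corresponding HLS or Green step delivers $u\in L^\infty_{loc}(\R^N)$.

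\textbf{Closing argument and main obstacle.} Once $u$ is locally bounded, pointwise bounds for the Riesz potential of a bounded function together with the smoothness of $u$ away from $0$ render $f$ locally H\"older continuous, and interior Schauder theory for $-\Delta+\mathrm{id}$ then yields $u\in C^2(\R^N)$; the classical equation (\ref{eq 1.3}) holds pointwise everywhere, with the limit at infinity furnished by Lemma \ref{lm 2.1}. The delicate point, as foreshadowed in the introduction, is precisely the balancing of the two factors: HLS gains $\alpha/N$ of integrability on $I_\alpha[u^p]$, but H\"older instantly repays $q/t_n$ on the factor $u^q$, so it is genuinely the \emph{combined} exponent $p+q$ that governs the contraction rather than $p$ or $q$ individually. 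The three conditions in (\ref{1.4}) are calibrated sharply so that the per-iteration gain $(\alpha+2)/N$ dominates the multiplicative loss $(p+q)/t_n$ as soon as $t_n$ surpasses the threshold $\tfrac{N(p+q-1)}{\alpha+2}$, which the subcritical assumption places strictly below $\tfrac{N}{N-2}$, the very ceiling provided by the initial step $u=\mathbb{G}[f]\in L^{t_0}_{loc}$.
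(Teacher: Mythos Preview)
Your approach is correct and essentially the same as the paper's: both bootstrap $u$ from $L^{t_0}$, $t_0<\tfrac{N}{N-2}$, through the chain Riesz-potential estimate $\to$ H\"older $\to$ Green-operator regularity until $f=I_\alpha[u^p]u^q$ lands in $L^s$ with $s>\tfrac{N}{2}$. The paper organizes the same iteration into three cases according to the sign of $p(N-2)-\alpha$ and tracks the $L^s$-exponent of $f$ (choosing the H\"older splitting parameter $t_1$ so that the two factors are balanced), whereas you parametrize the recursion more compactly via $y_n=1/t_n$ and its fixed point $y^*=\tfrac{\alpha+2}{N(p+q-1)}$; the two formulations are equivalent once one unwinds the paper's choice of $t_1$.
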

\begin{proof}
Since $I_\alpha[u^p]u^q\in L^1(\R^N)$ and $k=0$, we have that
  $$-\Delta u+u=  I_\alpha[u^p]u^q\quad {\rm in\ the\ distribution\ sense }  $$
and then  $u\in L^{t}(\R^N)$ with $t<\frac{N}{N-2}$.

\smallskip

{\it Case 1: $p<\frac{\alpha}{N-2}$.} it follows from  Proposition \ref{embedding1} that  $I_\alpha[u^p]\in L^{\infty}_{loc}(\R^N)$.
Then applying the standard bootstrap argument, we have that $u\in L^\infty(\R^N)$ and then $u$ is a classical solution of (\ref{eq 1.3}).

\smallskip

{\it Case 2: $p=\frac{\alpha}{N-2}$.} Again by Proposition \ref{embedding1}, we see that $I_\alpha[u^p]\in L^{t}_{loc}(\R^N)$ for any $t>1$.
By H\"{o}lder's inequality, we have that
\begin{eqnarray}\label{2.5}
\int_{B_1(0)} (I_\alpha[u^p]u^q)^s dx&\le &\left[\int_{B_1(0)} (I_\alpha[u^p]) ^{st} dx\right]^{\frac1t} \left[\int_{B_1(0)} u ^{qs\frac t{t-1}} dx\right]^{1-\frac1t}
\end{eqnarray}
for $s,t>1$  satisfying
$$
\left\{  \arraycolsep=1pt
\begin{array}{lll}
 \displaystyle   st<+\infty,\\[2mm]
 \displaystyle   s\frac t{t-1}< \frac1q\frac{N}{N-2}.
\end{array}
\right.
$$
Since $q<\frac{N}{N-2}$, we choose $t$ big enough, then
$$I_\alpha[u^p]u^q\in L^s(\R^N)$$
for any $s\in (1,\frac1q\frac{N}{N-2})$. Then by Proposition \ref{embedding}, $u\in L^{\frac{Ns}{N-2s}}(\R^N)$ and $\frac{1}{p}\frac{Ns}{N-2s-\alpha s}>\frac{N}{\alpha}$,
thus $I_\alpha[u^p]\in L^\infty(\R^N)$ and by standard elliptic regularity theory, $u$ is a classical solution of (\ref{eq 1.3}).

\smallskip

{\it Case 3: $p>\frac{\alpha}{N-2}$.} We have that $I_\alpha[u^p]\in L^{\frac{N\theta}{N-\alpha\theta}}_{loc}(\R^N)$  for any $\theta<\frac1p\frac{N}{N-2}$.
By H\"{o}lder's inequality, (\ref{2.5}) holds
for $s \geq 1, t>1$  satisfying
\begin{equation}\label{2.2}
\left\{  \arraycolsep=1pt
\begin{array}{lll}
 \displaystyle  st<\frac{N\frac1p\frac{N}{N-2}}{N-\alpha\frac1p\frac{N}{N-2}}=\frac{N}{p(N-2)-\alpha},\\[4mm]
 \displaystyle  s\frac t{t-1}< \frac1q\frac{N}{N-2}.
\end{array}
\right.
\end{equation}
When  $s=1$,  (\ref{2.2}) reduces to
\begin{equation}\label{2.3}
   \frac{N}{N-q(N-2)}<  t<\frac{N}{p(N-2)-\alpha}.
\end{equation}
Clearly the existence of $t$ salifying (\ref{2.3}) is guaranteed by (\ref{1.4}).
Now choose
\begin{equation}\label{2.3+1}
t=t_1:= \frac{(p+q)(N-2)-\alpha}{p(N-2)-\alpha}
\end{equation}
 such that
 \begin{equation}\label{2.3+2}
\frac1{t_1}\frac{N}{p(N-2)-\alpha}=\frac{t_1-1}{t_1}\frac1q\frac{N}{N-2},
\end{equation}
holds, then (\ref{2.2}) becomes to
$$
   s<\frac{N}{(p+q)(N-2)-\alpha}
$$
and
$$I_\alpha[u^p]u^q\in L^{s}(\R^N)\quad {\rm for\ any }\ s< \frac{N}{(p+q)(N-2)-\alpha}.$$

If $\frac{N}{(p+q)(N-2)-\alpha}> \frac N2$,  by Proposition \ref{embedding}, it implies that $u\in L^\infty(\R^N)$,  then  $u$ is a classical solution of (\ref{eq 1.3}).

If $\frac{N}{(p+q)(N-2)-\alpha}\le \frac N2$,  fix some $s_1\in(1,\frac{N}{(p+q)(N-2)-\alpha}),$
then
 $u^p\in L^{\theta}(\R^N)$ with $\theta\le\frac1p\frac{Ns_1}{N-2s_1}$ and it follows by Proposition \ref{embedding1} that
$I_\alpha[u^p]\in L^{\frac{N\theta}{N-\alpha\theta}}_{loc}(\R^N)$  for any $\theta\le\frac1p\frac{Ns_1}{N-2s_1}$.
Now (\ref{2.5}) holds  for $s,t>1$  satisfying
\begin{equation}\label{2.2.1}
\left\{  \arraycolsep=1pt
\begin{array}{lll}
 \displaystyle   st\le  \frac{Ns_1}{p(N-2s_1)-\alpha s_1},\\[4mm]
 \displaystyle   s\frac t{t-1}\le \frac1q\frac{Ns_1}{N-2s_1}.
\end{array}
\right.
\end{equation}
Take  $s=s_1$, then (\ref{2.2.1}) reduces to
\begin{equation}\label{2.3.1}
   \frac{N}{N-q(N-2s_1)}\le t\le\frac{N}{p(N-2s_1)-\alpha s_1}.
\end{equation}
 Choose $t=t_2:=\frac{(p+q)(N-2s_1)-\alpha s_1}{p(N-2s_1)-\alpha s_1}$ and then
$$\frac1{t_2}\frac{Ns_1}{p(N-2s_1)-\alpha s_1}=\frac{t_2-1}{qt_2} \frac{Ns_1}{N-2s_1}.$$
Condition (\ref{2.2.1}) becomes to
\begin{equation}\label{2.4}
   s\le \frac1{t_2}\frac{Ns_1}{p(N-2s_1)-\alpha s_1}.
\end{equation}
Choose $s=s_2:=\frac1{t_2}\frac{Ns_1}{p(N-2s_1)-\alpha s_1}=  \frac{Ns_1}{(p+q)(N-2s_1)-\alpha s_1}$, then  $I_\alpha[u^p]u^q\in L^{s_2}(\R^N)$ and
$$s_2> \frac{N}{(p+q)(N-2)-\alpha}s_1.$$

If $s_2>\frac{N}2$, we are done. If not, step by step, assume that $u\in L^{s_{n-1}}(\R^N)$ with $s_{n-1}<\frac{N}2$, then we can find $s>s_{n-1}$ such that
$I_\alpha[u^p]u^q\in L^{s}(\R^N)$  and
(\ref{2.5}) holds  for $s,t>1$  satisfying
\begin{equation}\label{2.2.1-}
\left\{  \arraycolsep=1pt
\begin{array}{lll}
 \displaystyle   st\le  \frac{Ns_{n-1}}{p(N-2s_{n-1})-\alpha s_{n-1}},\\[4mm]
 \displaystyle   s\frac t{t-1}\le \frac1q\frac{Ns_{n-1}}{N-2s_{n-1}}.
\end{array}
\right.
\end{equation}
Choose $ t_n:=\frac{(p+q)(N-2s_{n-1})-\alpha s_{n-1}}{p(N-2s_{n-1})-\alpha s_{n-1}}$
and
$$ s_n  =  \frac{N}{(p+q)(N-2s_{n-1})-\alpha s_{n-1}}s_{n-1}.$$
Observing that $s_n>1$ and $\{s_n\}_n$ is increasing with respect to $n$  satisfying
\begin{eqnarray*}
    s_n    &\ge &   \frac{N}{(p+q)(N-2 )-\alpha }  s_{n-1}\\
    &\ge &  \left( \frac{N}{(p+q)(N-2 )-\alpha } \right)^{n-1} s_1
    \to +\infty\quad{\rm as}\quad n\to+\infty.
\end{eqnarray*}
Then there exists $n_0$ such that $s_{n_0 -1} \le \frac N2$ and  $s_{n_0}>\frac N2$, thus we have that $\mathbb{G}[I_\alpha[u^p]u^q]\in L^\infty(\R^N)$
and the rest of the proof is standard  to obtain that $u$ is a classical solution of (\ref{eq 1.3}).
\end{proof}

Next we consider the subcritical case with $k>0$. We have the following

\begin{proposition}\label{pr 2.3}
Assume that $\alpha\in(0,N)$, (\ref{1.4}) holds for $p>0$, $ q\ge 1$ and $u$ is a positive classical solution of (\ref{eq 1.1})  satisfying $u\in L^p(\R^N)$.
Assume more that $u$ is a  weak solution of (\ref{eq 1.2}) with $k>0$.

Then \begin{equation}\label{2.3.2}
 \lim_{|x|\to0^+} u(x)|x|^{N-2}=c_{N} k.
\end{equation}
\end{proposition}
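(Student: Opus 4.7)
The plan is to exploit the representation coming from (\ref{eq 1.2}):
$$u(x)=k\Gamma_0(x)+\mathbb{G}[I_\alpha[u^p]u^q](x)\quad\text{in }\R^N\setminus\{0\},$$
where $\Gamma_0$ is the fundamental solution of $-\Delta+\mathrm{id}$ and satisfies $\Gamma_0(x)\sim c_N|x|^{2-N}$ as $|x|\to0^+$. Setting $w:=\mathbb{G}[I_\alpha[u^p]u^q]$, identity (\ref{2.3.2}) reduces to proving $\lim_{|x|\to0^+}w(x)|x|^{N-2}=0$, which follows in particular from any bound showing $w\in L^\infty_{\loc}(\R^N)$. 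The proof therefore reduces to a singularity-reduction bootstrap applied to the source $I_\alpha[u^p]u^q$ that generates $w$.

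To carry this out I would decompose $u$ iteratively as
$$u(x)\le u_n(x)+k\Gamma_0(x)+\sum_{i=1}^{n-1}\Gamma_i(x)\quad\text{near }0,$$
where $\Gamma_{i+1}:=\mathbb{G}[I_\alpha[\Gamma_i^p]\Gamma_i^q]$ is built from the preceding layer and $u_n$ is the residual term. Starting from $u(x)\le C|x|^{-(N-2)}$ near the origin (which follows from Lemma \ref{lm2.6}, the representation formula, and Proposition \ref{pr 2.4}), Corollary \ref{lm 2.4} applied with $t=1$ produces inductively $\Gamma_i(x)\le C_i|x|^{-\tau_i}$ with
$$\tau_0=N-2,\qquad\tau_{i+1}=(p+q)\tau_i-\alpha-2.$$
The integrability hypotheses $(p\tau_i-\alpha)t<N$ and $q\tau_i t<N$ in Corollary \ref{lm 2.4} reduce here, with $t=1$, to $p\tau_i<N+\alpha$ and $q\tau_i<N$, which are secured at $i=0$ by $p,q<\frac{N}{N-2}$ and then propagate because $\tau_i$ is decreasing. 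That the recursion is actually decreasing and drives $\tau_i$ below $0$ in finitely many steps is the key analytic point: the affine map $\tau\mapsto(p+q)\tau-\alpha-2$ has fixed point $\tau^*=(\alpha+2)/(p+q-1)$, and the remaining subcritical bound $(p+q)(N-2)<N+\alpha$ reads exactly $\tau_0<\tau^*$; since $p+q>1$ (because $q\ge1$, $p>0$), iteration gives $\tau_i-\tau^*=(p+q)^i(\tau_0-\tau^*)\to-\infty$, so some $\tau_{n_0}\le0$ and $\Gamma_{n_0}$ is bounded near the origin.

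Once the singular tower $\{\Gamma_i\}_{i\le n_0}$ has been reduced to a bounded function near $0$, the residual $u_{n_0}$ satisfies a distributional equation whose right-hand side, obtained by expanding $I_\alpha[u^p]u^q$ along the decomposition above, contains only cross-terms with strictly weaker singularities than $|x|^{-(N-2)}$. A second $L^t$-bootstrap of the type already used in the proof of Proposition \ref{pr 2.2} then upgrades $u_{n_0}$ to $L^\infty_{\loc}(\R^N)$, whence $w\in L^\infty_{\loc}(\R^N)$ and $\lim_{|x|\to 0^+}u(x)|x|^{N-2}=kc_N$ follows. The main obstacle, as the authors themselves emphasize, lies in the accounting of the nonlinear cross-terms produced by $I_\alpha[u^p]u^q$ after substituting $u=u_n+k\Gamma_0+\sum_{i<n}\Gamma_i$: the Riesz factor $I_\alpha[\cdot^p]$ and the pointwise factor $(\cdot)^q$ carry incompatible scalings, so at every iteration one must simultaneously preserve both integrability conditions $(p\tau-\alpha)t<N$ and $q\tau t<N$ while driving the exponent $\tau$ down, a balance that requires exploiting all three strict inequalities in (\ref{1.4}) at once.
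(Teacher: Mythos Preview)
Your overall strategy---represent $u=k\Gamma_0+w$, decompose into a singular tower $\{\Gamma_i\}$ plus a residual $u_n$, drive the tower's exponent below zero, then bootstrap the residual to $L^\infty_{\loc}$---is the paper's. However, two steps are not justified, and together they are the heart of the proof.

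First, the starting bound $u(x)\le C|x|^{-(N-2)}$ does \emph{not} follow from the results you cite. Lemma~\ref{lm2.6} gives only the \emph{lower} bound $u\ge k\Gamma_0$; Proposition~\ref{pr 2.4} gives $I_\alpha[u^p]u^q\in L^1(\R^N)$, whence $w=\mathbb{G}[I_\alpha[u^p]u^q]\in L^t$ for every $t<\frac{N}{N-2}$ by Proposition~\ref{embedding}, but no pointwise control whatsoever. A priori nothing is known about $u$ near $0$ beyond this $L^t$ integrability; a pointwise upper bound is essentially the conclusion, not an input. This is precisely why the paper is forced to run two parallel iterations: a pointwise one on the tower built from the \emph{explicit} $\Gamma_0$, and an $L^t$ one on the residual.

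Second, and linked to the first point, your definition $\Gamma_{i+1}=\mathbb{G}[I_\alpha[\Gamma_i^p]\Gamma_i^q]$ captures only the pure singular term in the expansion of $\mathbb{G}[I_\alpha[(u_n+\Gamma)^p](u_n+\Gamma)^q]$; the cross-terms $\mathbb{G}[I_\alpha[u_n^p]\Gamma^q]$ and $\mathbb{G}[I_\alpha[\Gamma^p]u_n^q]$ must be pushed into $u_{n+1}$, which then depends on \emph{both} $u_n$ and $\Gamma$ and does not satisfy a closed self-improving inequality on which to run the $L^t$ bootstrap of Proposition~\ref{pr 2.2}. You correctly flag this as ``the main obstacle'' but do not resolve it. The paper's device is exactly here: Young's inequality $ab\le c(a^{t_1}+b^{t_1/(t_1-1)})$ with the specific balance $t_1=\frac{(p+q)(N-2)-\alpha}{p(N-2)-\alpha}$ (in the main case $p>\frac{\alpha}{N-2}$) splits every cross-term into a piece depending only on $\Gamma_{n-1}$ and a piece depending only on $u_n$. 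The $\Gamma$-pieces are absorbed into a modified tower $\Gamma_n:=c\,\mathbb{G}\!\left[I_\alpha[\Gamma_{n-1}^p]^{t_1}+\Gamma_{n-1}^{qt_1/(t_1-1)}\right]$ (hence a recursion different from your $\tau_{i+1}=(p+q)\tau_i-\alpha-2$), while the $u$-pieces yield the closed inequality $u_n\le c\,\mathbb{G}\!\left[I_\alpha[u_n^p]^{t_1}+u_n^{qt_1/(t_1-1)}\right]+\Gamma_n$ on which the $L^t$ bootstrap actually runs. Without this decoupling the scheme does not close.
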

\begin{proof}
Observe that
$$ \lim_{|x|\to0^+} \Gamma_0(x)|x|^{N-2} =c_{N}$$
and
\begin{equation}\label{13.2.1a}
u=\mathbb{G}[I_\alpha[u^p]u^q]+k\Gamma_0,
\end{equation}
then   $u^p\in L^{t}(\R^N)$ with $t< \frac1p\frac{N}{N-2}$.

{\it Case 1: $p<\frac{\alpha}{N-2}$. } We see that $\frac1p\frac{N}{N-2}>\frac{N}{\alpha}$,  then it follows from Proposition \ref{embedding1} that
$$I_\alpha[u^p]\in L^\infty(\R^N).$$
In this case,
(\ref{13.2.1a}) could be reduced to
\begin{equation}\label{3.0.2}
u\le c_{16}\mathbb{G}[u^q]+k\Gamma_0,
\end{equation}
then it follows by \cite[Theorem 1.1]{L} that (\ref{2.3.2}) holds.

\smallskip

{\it Case 2: $p= \frac{\alpha}{N-2}$.}  We observe that $I_\alpha[u^p]\in L^t(\R^N)\quad{\rm for\ any}\ t>1.$
For any $s<\frac1q \frac{N}{N-2}$, there exists $\bar t>1$ such that
$$s\frac {\bar t}{\bar t-1}<\frac1q \frac{N}{N-2}$$
holds.
Then by using again (\ref{2.5}) with $t=\bar t$, we get $I_\alpha[u^p]u^q\in L^s(\R^N)$ for any $s<\frac1q \frac{N}{N-2}$.
Let $u_1: =\mathbb{G}[I_\alpha[u^p]u^q]$, then $u= u_1+k\Gamma_0$.  By
Young's inequality and the fact that $(a+b)^r\le 2^r(a^r+b^r)$ for $a,b,r>0$, we have that
\begin{eqnarray*}
   \displaystyle  u_1 &=& \mathbb{G}[I_\alpha[(u_1+k\Gamma_0)^p](u_1+k\Gamma_0)^q ] \\[1.5mm]
   \displaystyle  &\le & c_{17} \left(\mathbb{G}[I_\alpha[u_1^p]u_1^q] +k^p\mathbb{G}[I_\alpha[\Gamma_0^p]u_1^q]+k^q \mathbb{G}[I_\alpha[u_1^p]\Gamma_0^q] +\mathbb{G}[I_\alpha[ \Gamma_0^p]\Gamma_0^q]\right)
   \\
   \displaystyle  &\le &  c_{18}  \mathbb{G}\left[I_\alpha[u_1^{p}]^{\bar t}\right]  + c_{18} \mathbb{G}\left[  u_1^{\frac{q\bar t }{\bar t -1}}\right]
  +\Gamma_1,
\end{eqnarray*}
where
$$\Gamma_1=c_{18} \mathbb{G}\left[  I_\alpha[\Gamma_0^p]^{\bar t } +   \Gamma_0^{\frac{q\bar t }{\bar t -1}}\right]\le  c_{19} +c_{18}\mathbb{G}\left[\Gamma_0^{\frac{q\bar t }{\bar t -1}}\right]$$
by the fact that $I_\alpha[\Gamma_0^p](x)\le -c_9\log |x| $ for $0<|x|<\frac12$  and $\mathbb{G}\left[  I_\alpha[\Gamma_0^p]^{\bar t }\right]\in L^\infty(\R^N)$.
Since $I_\alpha[u_1^{p}]^{\bar t}\in L^\theta(\R^N)$ for any $\theta>1$, we obtain that $\mathbb{G}\left[ I_\alpha[u_1^{p}]^{\bar t}\right]\in L^\infty(\R^N)$.
Therefore (\ref{13.2.1a}) deduces into
\begin{equation}
  u\le c_{18}\mathbb{G}[u_1^{q\frac{\bar t}{\bar t-1}}]+c_{18}\norm{\mathbb{G}\left[ I_\alpha[u_1^{p}]^{\bar t}\right]}_{L^\infty(\R^N)}+\Gamma_1+k\Gamma_0\quad {\rm in}\quad B_1(0)\setminus\{0\}.
\end{equation}
Then we repeat the procedure in {\it Case 1} since $q\frac{\bar t}{\bar t-1}<\frac{N}{N-2}$.

\smallskip

{\it Case 3: $p> \frac{\alpha}{N-2}$.} We take again $t_1 > 1$ given by (\ref{2.3+1}) such that (\ref{2.3+2}) holds. Since
$ I_\alpha[u^p] \in L^{st_1}(\R^N)$ and $u^q\in L^{s\frac {t_1}{t_1-1}}(\R^N)$ for
  $s<\frac1{t_1}\frac{N}{p(N-2)-\alpha}$,
we obtain that:

if $\frac N{(p+q)(N-2)-\alpha}>\frac{N}{2}$, we have $u_1\in L^\infty(\R^N)$ and we are done;

if not,
re-denote $u_1=\mathbb{G}[I_\alpha[u^p]u^q]\in L^{\frac{N\theta}{N-2\theta}}(\R^N)$ for $\theta\in (1,\frac{N}{(p+q)(N-2)-\alpha})$
if $(p+q)(N-2)-2-\alpha>0$, or for $\theta\in (1,\infty)$
if $(p+q)(N-2)-2-\alpha=0$.
 By   Young's inequality, we have that
\begin{eqnarray*}
   \displaystyle  u_1 &=& \mathbb{G}[I_\alpha[(u_1+k\Gamma_0)^p](u_1+k\Gamma_0)^q ] \\[1.5mm]
   \displaystyle  &\le & c_{19} \left(\mathbb{G}[I_\alpha[u_1^p]u_1^q] +k^p\mathbb{G}[I_\alpha[\Gamma_0^p]u_1^q]+k^q \mathbb{G}[I_\alpha[u_1^p]\Gamma_0^q] +\mathbb{G}[I_\alpha[ \Gamma_0^p]\Gamma_0^q]\right)
   \\
   \displaystyle  &\le &  c_{20}   \mathbb{G}\left[   I_\alpha[u_1^{p}]^{t_1}  +  u_1^{\frac{qt_1 }{t_1 -1}}\right]
  +\Gamma_1,
\end{eqnarray*}
where
$$\Gamma_1=c_{20} \mathbb{G}\left[  I_\alpha[\Gamma_0^p]^{t_1} +   \Gamma_0^{\frac{qt_1 }{t_1 -1}}\right].$$
Let $T_0:= 2-N < 0$.
We notice that if $(p+q)(N-2)-2-\alpha>0$, we have
\begin{eqnarray*}
 \frac{N\frac{N}{(p+q)(N-2)-\alpha}}{N-2\frac{N}{(p+q)(N-2)-\alpha}}  =  \frac{N}{(p+q)(N-2)-\alpha-2}
         >\frac{N}{N-2},
\end{eqnarray*}
and
$$\Gamma_1(x)\le c_{21}|x|^{T_1}\quad {\rm for} \quad 0<|x|<1,$$
where $$T_1: =2+\frac{qt }{t_1 -1}T_0=2+\alpha-(p+q)(N-2)>T_0$$
and thus $u_1\in L^{\frac{N}{N-2}}(\R^N)$.

If $(p+q)(N-2)-2-\alpha=0$,
$$\Gamma_1(x)\le -c_{21}\ln |x| \quad {\rm for} \quad 0<|x|<\frac12,$$
and it is obvious that   $u_1\in L^{\frac{N}{N-2}}(\R^N)$ and
$$I_\alpha[u_1^{p}]^{t_1 }\in L_{loc}^{\frac1{t_1} \frac{N }{p(N-2)-\alpha }}(\R^N)\quad{\rm and} \quad u_1^{\frac{qt_1 }{t_1 -1}}\in L^{\frac{t_1 -1}{qt_1 }\frac{N }{N-2}}(\R^N).$$
Letting
$$s_1=\frac{N}{(p+q)(N-2)-\alpha}\quad{\rm and}\quad  u_2=c_{20}  \mathbb{G}\left[ I_\alpha[u_1^{p}]^{t_1}  +  u_1^{\frac{qt_1}{t_1-1}}\right],$$
we have that $\frac N{N-2}<s_1<\frac{N}{2}$ and $u_1\le u_2+\Gamma_1,$
where
 $u_2\in L^{\frac{Ns_1}{N-2s_1}}(\R^N)$.

  By   Young's inequality, we have that
\begin{eqnarray*}
   \displaystyle  u_2
   \displaystyle  &\le &  c_{22}  \mathbb{G}\left[   I_\alpha[u_2^{p}]^{t_1}  +  u_2^{\frac{qt_1 }{t_1 -1}}\right]
  +\Gamma_2,
\end{eqnarray*}
where
$$\Gamma_2=c_{22} \mathbb{G}\left[  I_\alpha[\Gamma_1^p]^{t_1} +   \Gamma_1^{\frac{qt_1 }{t_1 -1}}\right].$$
We notice that
$$2+(\alpha+pT_1) t_1> 2+\frac{qt_1 }{t_1 -1}T_1>T_1$$
and
$$\Gamma_2(x)\le c_{23}|x|^{T_2}\quad {\rm for} \quad 0<|x|<1,$$
where
$$T_2:=2+\frac{qt_1 }{t_1 -1}T_1>T_1.$$

Note that
$$I_\alpha[u_2^{p}]^{t_1} \in L^{\frac1{t_1}\frac{Ns_1}{p(N-2s_1)-\alpha s_1}}_{loc}(\R^N)\quad\; {\rm and}\quad\;  u_2^{\frac{qt_1 }{t_1 -1}}\in L^{\frac{t_1 -1}{qt_1 }s_1}(\R^N),$$
where
\begin{equation}\label{2.3+3}
\frac1{t_1}\frac{Ns_1}{p(N-2s_1)-\alpha s_1}> \frac1{t_1}\frac{N }{p(N-2 )-\alpha}, \quad\;
 \frac{t_1 -1}{qt_1 }s_1> \frac1{t_1}\frac{N }{p(N-2 )-\alpha}.
 \end{equation}
Then we have that
$$u_2 \in L^{\frac{N\theta}{N-2\theta}}(\R^N)\quad{\rm for}\ \ \theta\in \left(1,\frac{N}{(p+q)(N-2)-\alpha}\right].$$

Inductively, we assume that
$$
u_{n-1}\le c_{n-1}\mathbb{G}[I_\alpha[u_{n-1}^p]^{t_1}+u_{n-1}^{\frac{qt_1}{t_1-1}}]+\Gamma_{n-1}
$$
for some suitable constant $c_{n-1}$. Denote
$$u_n:=c_{n-1} \mathbb{G}\left[I_\alpha[u_{n-1}^{p}]^{t_1}  + u_{n-1}^{\frac{qt_1}{t_1-1}} \right],$$
then $u_{n-1}\le u_n+\Gamma_{n-1}$ and
\begin{equation}\label{2.1-1}
u_n\le c_{n}  \mathbb{G}\left[I_\alpha[u_n^{p}]^{t_1}  +  u_n^{\frac{qt_1 }{t_1 -1}}\right]+ \Gamma_n,
\end{equation}
where
$$\Gamma_n=c_{24} \mathbb{G}\left[  I_\alpha[\Gamma_{n-1}^p]^{t_1} +   \Gamma_{n-1}^{\frac{qt_1 }{t_1 -1}}\right].$$

We notice that
$$I_\alpha[u_{n-1}^{p}]^{t_1} \in L^{\frac1{t_1}\frac{Ns_1}{p(N-2s_1)-\alpha s_1}}_{loc}(\R^N)\qquad{\rm and}\qquad u_{n-1}^{\frac{qt_1 }{t_1 -1}}\in L^{\frac{t_1 -1}{qt_1 }s_1}(\R^N),$$
where $t_1,s_1$ satisfy (\ref{2.3+3}).
Then we get again
$$u_n \in L^{\frac{N\theta}{N-2\theta}}(\R^N)\quad{\rm for}\ \ \theta\in \left(1,\frac{N}{(p+q)(N-2)-\alpha}\right].$$
Furthermore, we have that for $0<|x|<\frac12$,
 $$
\Gamma_n(x)\le
 \arraycolsep=1pt\left\{
\begin{array}{lll}
 \displaystyle  c_{25}|x|^{T_n}\quad
& {\rm if}\quad  T_n<0,\\[2mm]
 \displaystyle   -c_{25}\ln |x|  \quad &{\rm if } \quad T_n=0,
 \\[2mm]
  c_{25} \quad
  &{\rm if}\quad  T_n>0,
\end{array}
\right.
$$
where
$$T_n: =2+\frac{qt_1 }{t_1 -1}T_{n-1}.$$
Since $\frac{qt_1 }{t_1 -1}>1$ and $T_1-T_0>0$, then
\begin{eqnarray*}
 T_n-T_{n-1} &=& \frac{qt_1 }{t_1 -1}( T_{n-1}-T_{n-2})
          \\ &=& \left(\frac{qt_1 }{t_1 -1}\right)^{n-1}(T_1-T_0) \to +\infty\quad{as}\; n\to\infty.
\end{eqnarray*}
Then there exists $n_0\ge 1$ such that
$$T_{n_0}>0\quad {\rm and}\ \ T_{n_0-1}\le 0.$$
Thus, $\Gamma_{n_0}\in L^\infty(\R^N)$ and
$$u\le u_{n_0}+\sum_{i=1}^{n_0-1}\Gamma_i+k\Gamma_0.$$

Finally, our aim is to prove $u_{n_0}\in L^\infty(\R^N)$. Observing that (\ref{2.1-1}) holds for $n=n_0$ and $\Gamma_{n_0}\in L^\infty(\R^N)$, that is,
\begin{eqnarray*}
   \displaystyle  u_{n_0}  \le  c_{26} \left(   \mathbb{G}\left[I_\alpha[u_{n_0}^p]^{t_1}+u_{n_0}^{\frac{q t_1}{t_1-1}}\right]+1\right),
\end{eqnarray*}
where $u_{n_0}\in L^{\frac{Ns_1}{N-2s_1}}(\R^N)$. Then
$$I_\alpha[u_{n_0}^{p}]^{t_1 }\in L_{loc}^{\frac1{t_1} \frac{Ns_1 }{p(N-2s_1)-\alpha s_1 }}(\R^N)\quad{\rm and} \quad u_{n_0}^{\frac{qt_1 }{t_1 -1}}\in L^{\frac{t_1 -1}{qt_1 }\frac{N s_1}{N-2s_1}}(\R^N).$$
We see that, by the definition of $t_1, s_1$,
\begin{eqnarray*}
&&\frac1{t_1} \frac{Ns_1 }{p(N-2s_1)-\alpha s_1 }- \frac{t_1 -1}{qt_1 }\frac{N s_1}{N-2s_1}
\\&&= \frac1{t_1} \frac{N s_1}{p(N-2 )-\alpha }\left[ \frac{\alpha N(s_1-1)}{[p(N-2s_1)-\alpha s_1](N-2s_1)}\right] >0
\end{eqnarray*}
and
\begin{eqnarray*}
  s_2 &:=& \frac{t_1 -1}{qt_1 }\frac{N s_1}{N-2s_1}
   = \frac{N-2}{(p+q)(N-2)-\alpha}\frac{N}{N-2s_1} s_1>s_1
\end{eqnarray*}
by the fact that
 $p+q<\frac{N+\alpha}{N-2}$. Therefore by Proposition \ref{embedding} we obtain that
$$u_{n_0}\in L^{\frac{Ns_2}{N-2s_2}}(\R^N).$$

Inductively, assume that
$$u_{n_0}\in L^{\frac{Ns_{n-1}}{N-2s_{n-1}}}(\R^N)$$
 for $s_{n-1}\in(1,\, \frac N2)$.
then we have that
$$
I_\alpha[u_{n_0}^{p}]^{t_1 }\in
 \arraycolsep=1pt\left\{
\begin{array}{lll}
 \displaystyle   L_{loc}^{\frac1{t_1} \frac{Ns_{n-1} }{p(N-2s_{n-1})-\alpha s_{n-1} }}(\R^N) \quad
 {\rm if}\quad  p(N-2s_{n-1})-\alpha s_{n-1}>0,\\[2mm]
 \displaystyle   L_{loc}^{t}(\R^N)  \quad {\rm for\ any}\quad t>1 \quad{\rm if}\quad p(N-2s_{n-1})-\alpha s_{n-1}=0,
 \\[2mm]
  L_{loc}^{\infty}(\R^N) \quad
  {\rm if}\quad  p(N-2s_{n-1})-\alpha s_{n-1}<0
\end{array}
\right.
$$
and
$$u_{n_0}^{\frac{qt_1 }{t_1 -1}}\in L^{\frac{t_1 -1}{qt_1 }\frac{N s_{n-1}}{N-2s_{n-1}}}(\R^N).$$
For $p(N-2s_{n-1})-\alpha s_{n-1}>0$, we see that
\begin{eqnarray*}
&&\frac1{t_1} \frac{Ns_{n-1} }{p(N-2s_{n-1})-\alpha s_{n-1} }- \frac{t_1 -1}{qt_1 }\frac{N s_{n-1}}{N-2s_{n-1}}
\\&&= \frac1{t_1} \frac{N s_{n-1}}{p(N-2 )-\alpha }\left[ \frac{\alpha N(s_{n-1}-1)}{[p(N-2s_{n-1})-\alpha s_{n-1}](N-2s_{n-1})}\right] >0
\end{eqnarray*}
and
\begin{eqnarray*}
  s_n &:=& \frac{t_1 -1}{qt_1 }\frac{N s_{n-1}}{N-2s_{n-1}}
   =\frac{N-2}{(p+q)(N-2)-\alpha}\frac{N}{N-2s_{n-1}} s_{n-1}>s_{n-1}
\end{eqnarray*}
due to the facts that
 $p+q<\frac{N+\alpha}{N-2}$ and $s_{n-1}>1$, then we obtain that
 $$
u_{n_0}\in
 \arraycolsep=1pt\left\{
\begin{array}{lll}
 \displaystyle  L^{\frac{Ns_n}{N-2s_n}}(\R^N)\quad
 {\rm if}\quad  s_n<\frac N2,\\[2mm]
 \displaystyle   L ^{t}(\R^N)  \quad {\rm for\ any}\quad t>1 \quad{\rm if}\quad s_n=\frac N2,
 \\[2mm]
  L ^{\infty}(\R^N) \quad
  {\rm if}\quad  s_n>\frac N2.
\end{array}
\right.
$$
For $s_n>\frac N2$, we are done, for $s_n=\frac N2$, we may repeat the above process again to have
$u_{n_0}\in L^\infty(\R^N)$, and then we are done.
For $s_n<\frac N2$, we have that
\begin{eqnarray*}
    s_n
     \ge    \left(  \frac{t_1 -1}{qt_1 }\frac{N  }{N-2s_1} \right)^{n-1} s_1 \to +\infty\quad{\rm as}\; n\to+\infty.
\end{eqnarray*}
Thus, there exists $n_1$  such that $s_{n_1}\ge \frac{N}{2}$ and then
$$u_{n_0}\in L^\infty(\R^N).$$
Therefore,
\begin{equation}\label{4.1.1}
 k\Gamma_0\le u\le u_{n_0} +\sum_{i=1}^{n_0-1}\Gamma_i+k\Gamma_0.
\end{equation}
Note that for $i=1,2,\cdots, n_0-1$,
$$ \Gamma_i(x)\le c_{27} |x|^{T_i},$$
where $T_i>2-N$. As a consequence, we obtain the conclusion.
\end{proof}

\smallskip

\smallskip

\noindent{\bf Proof of Theorem \ref{teo 0}.}
From Proposition \ref{pr 2.1}, we obtain that  $I_\alpha[u^p]u^q\in L^1(\R^N)$ and  $u$ is a  weak solution of (\ref{eq 1.2}) for some $k\ge0$.
Furthermore, if $(p,\,q)$ is supercritical,  we have that $k=0$. For the subcritical case,  we derive  that $u$ is a classical solution of (\ref{eq 1.3}) if $k=0$
from Proposition \ref{pr 2.2}, and  (\ref{1.2}) holds by Proposition \ref{pr 2.3} if $k>0$.
\hfill$\Box$

\smallskip

\vskip0.5cm
\section{Existence}
In this section, we give the proof of Theorem \ref{teo 1}. To this end, denote by
 $\Phi_0$   the solution of
$$  \arraycolsep=1pt
\begin{array}{lll}
 \displaystyle\ \   -\Delta u+ \frac14u=\delta_0\quad
 &{\rm in}\quad  \mathbb{R}^N,\\[2mm]
 \phantom{  }
 \displaystyle   \lim_{|x|\to+\infty}u(x)=0.
\end{array}
$$
By constructing suitable super and sub solution, we derive that
\begin{equation}\label{4.1}
\lim_{|x|\to0^+}\frac{\Gamma_0(x)}{\Phi_0(x)}=1,\qquad  \lim_{|x|\to+\infty}\frac{\Gamma_0(x)}{\Phi_0(x)}=0
\end{equation}
and
\begin{equation}\label{4.2}
 \Gamma_0 \le \Phi_0  \quad {\rm in}\;\; \R^N\setminus\{0\}.
\end{equation}

\begin{proposition}\label{pr 4.1}
Assume that $p>0,\, q\ge1$ satisfy (\ref{1.4}),  then there exists $c_{28}>0$ such that
\begin{equation}\label{4.3}
\mathbb{G}[I_\alpha[\Phi_0^p]\Phi_0^q]\le c_{28}\Phi_0\quad {\rm in}\;\; \R^N\setminus\{0\}.
\end{equation}

\end{proposition}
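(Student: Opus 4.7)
My plan is to split $\R^N\setminus\{0\}$ into a near-origin region and a far-field region and show in each that $\mathbb{G}[I_\alpha[\Phi_0^p]\Phi_0^q]$ is dominated by $\Phi_0$. The key asymptotics of $\Phi_0$ are $\Phi_0(x)\sim c_N|x|^{2-N}$ as $|x|\to 0^+$, and $\Phi_0(x)\sim c_N|x|^{-(N-1)/2}e^{-|x|/2}$ as $|x|\to\infty$, the exponent $\tfrac12$ reflecting the coefficient $\tfrac14$ in the defining equation. In particular $\Phi_0\in L^1(\R^N)\cap L^\infty(\R^N\setminus B_1(0))$, the subcritical hypothesis (\ref{1.4}) forces $p(N-2),q(N-2)<N$ so that $\Phi_0^p,\Phi_0^q\in L^1_{\loc}(\R^N)$, and by the splitting argument of Lemma \ref{lm 2.1} both $I_\alpha[\Phi_0^p]$ and $V:=I_\alpha[\Phi_0^p]\Phi_0^q$ belong to $L^\infty(\R^N\setminus B_1(0))$.

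For the near-origin estimate on $B_{1/2}(0)\setminus\{0\}$ I would apply Lemma \ref{lm 2.3} twice. The first application, with $V_\tau=\Phi_0^p$ and $\tau=p(N-2)\in(0,N)$, yields $I_\alpha[\Phi_0^p](x)\le c|x|^{-p(N-2)+\alpha}$ (with the usual logarithmic and constant modifications in the borderline and subcritical subcases). Multiplying by $\Phi_0^q(x)\le c|x|^{-q(N-2)}$ gives $V(x)\le c|x|^{-\tau_0}$ on $B_{1/2}(0)\setminus\{0\}$, where $\tau_0:=(p+q)(N-2)-\alpha$; the subcritical hypothesis (\ref{1.4}) is precisely $\tau_0<N$. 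A second application of Lemma \ref{lm 2.3} (or an elementary bound if $\tau_0\le 0$) then gives $\mathbb{G}[V](x)\le c|x|^{2-\tau_0}$ on $B_{1/2}(0)\setminus\{0\}$, and since $2-\tau_0>2-N$ this is dominated by $c|x|^{2-N}\le c'\Phi_0(x)$ near the origin.

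The far-field analysis is more delicate. Splitting $V=V\chi_{B_1(0)}+V\chi_{\R^N\setminus B_1(0)}$, the first piece lies in $L^1(\R^N)$ by the near-origin bound, and for $|x|\ge 2$ the uniform estimate $G(x,y)\le c|x|^{-(N-1)/2}e^{-|x|/2}$ on $y\in B_1(0)$ (using the asymptotics $G(x,y)\sim|x-y|^{-(N-1)/2}e^{-|x-y|}$ together with $|x-y|\ge|x|/2$) already gives $\mathbb{G}[V\chi_{B_1(0)}](x)\le c\Phi_0(x)$. For the second piece I would first upgrade the bound on $I_\alpha[\Phi_0^p]$ at infinity to $I_\alpha[\Phi_0^p](y)\le c|y|^{\alpha-N}$ by the splitting argument of Lemma \ref{lm 2.1} (exploiting $\Phi_0^p\in L^1$ together with the exponential decay of $\Phi_0$), yielding $V(y)\le c|y|^{\alpha-N-q(N-1)/2}e^{-q|y|/2}$ for $|y|\ge 1$. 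It then remains to estimate $\int_{|y|>1}G(x,y)V(y)\,dy$ for $|x|$ large, and I would split the $y$-integration into $\{|y|\le|x|/2\}$, $\{|y|\ge 2|x|\}$, and the comparable annulus $\{|x|/2<|y|<2|x|\}$, using the near-diagonal bound $G(x,y)\le c|x-y|^{2-N}$ in the annulus and the exponential bound elsewhere.

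The main obstacle is this last convolution estimate, and it is where the hypothesis $q\ge 1$ plays its decisive role: a convolution of two profiles decaying like $e^{-|x-y|}$ and $e^{-q|y|/2}$ inherits the slower of the two rates, namely $\min(1,q/2)\ge\tfrac12$, which matches the decay of $\Phi_0$, with at most polynomial-in-$|x|$ corrections that are absorbed by the polynomial prefactor of $\Phi_0$. Were $q<1$, this rate would be $q/2<\tfrac12$, strictly slower than $\Phi_0$, and the proposition would fail; this identifies $q\ge 1$ as the sharp threshold for the barrier estimate. Combining the near-origin and far-field bounds, together with a trivial compactness argument on any intermediate annulus, yields $\mathbb{G}[I_\alpha[\Phi_0^p]\Phi_0^q]\le c_{28}\Phi_0$ on $\R^N\setminus\{0\}$.
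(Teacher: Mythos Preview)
Your near-origin argument coincides with the paper's: both invoke Lemma \ref{lm 2.3} (or its content) to see that $\mathbb{G}[I_\alpha[\Phi_0^p]\Phi_0^q]$ blows up at most like $|x|^{2+\alpha-(p+q)(N-2)}$, which is strictly weaker than $|x|^{2-N}\sim\Phi_0$ under (\ref{1.4}).

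For the far field, however, your route is genuinely different. You propose to estimate the convolution $\int G(x,y)V(y)\,dy$ directly, splitting in $y$ and tracking exponential rates; the crux is that a convolution of profiles $e^{-|x-y|}$ and $e^{-q|y|/2}$ inherits the slower rate $\min(1,q/2)\ge\tfrac12$, with the polynomial prefactor $|y|^{\alpha-N}$ of $I_\alpha[\Phi_0^p]$ absorbing borderline losses when $q=1$. This is correct and, as you note, identifies $q\ge1$ as the sharp threshold for the inequality. The paper instead avoids all convolution estimates by a comparison argument: away from the origin $(-\Delta+1)\Phi_0=\tfrac34\Phi_0$, and once one shows $I_\alpha[\Phi_0^p](x)\le c|x|^{\alpha-N}\to0$ at infinity, there is $r>2$ with $\Phi_0<1$ and $I_\alpha[\Phi_0^p]\le\tfrac14$ on $\R^N\setminus B_r(0)$, whence (using $q\ge1$) $I_\alpha[\Phi_0^p]\Phi_0^q\le\tfrac14\Phi_0<\tfrac34\Phi_0$. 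Thus a constant multiple of $\Phi_0$ is a supersolution of $(-\Delta+1)u=I_\alpha[\Phi_0^p]\Phi_0^q$ in the exterior domain, and the maximum principle delivers the bound at once. This is shorter and explains the otherwise mysterious choice of the coefficient $\tfrac14$ in the definition of $\Phi_0$: it is precisely what creates the gap $\tfrac34\Phi_0$ needed for the supersolution. Your direct approach, while more laborious, does not rely on this trick and yields the quantitative decay of the ratio $\mathbb{G}[I_\alpha[\Phi_0^p]\Phi_0^q]/\Phi_0$ at infinity as a byproduct.
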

\begin{proof} Observe that $ \mathbb{G}[I_\alpha[\Phi_0^p]\Phi_0^q]$ is in $C^2(\R^N\setminus\{0\})$ and
has the singularity $|x|^{(2-N)(p+q)+\alpha+2}$ near the origin, which is weaker than $\Phi_0$
by the fact that
$$\lim_{|x|\to0^+}\Phi_0(x)|x|^{N-2}  =c_N.$$
Thus we only need to consider the asymptotic behavior of $\mathbb{G}[I_\alpha[\Phi_0^p]\Phi_0^q]$ at infinity.

Since
$$\lim_{|x|\to+\infty}\Phi_0(x)|x|^{\frac{N-1}2}e^{\frac12|x|} =e^{\frac12} $$
and $\Phi_0$ is radially symmetric and decreasing with respect to $|x|$,
we have that  for $|x|>2$,
\begin{eqnarray*}
 I_\alpha[\Phi_0^p](x) &=& \int_{B_{\frac{|x|}2}(0)} \frac{\Phi_0^p(y)}{|x-y|^{N-\alpha}}dy +\int_{\R^N\setminus B_{\frac{|x|}2}(0)} \frac{\Phi_0^p(y)}{|x-y|^{N-\alpha}}dy
 \\   &\le&  c_{29}\norm{\Phi_0}_{L^p(\R^N)}^p |x|^{\alpha-N}+\Phi_0^{\frac p2}(\frac{|x|}2) \int_{\R^N} \frac{\Phi_0^{\frac p2}(y)}{|x-y|^{N-\alpha}}dy
 \\   &\le&  c_{30}\norm{\Phi_0}_{L^p(\R^N)}^p |x|^{\alpha-N}+c_{33}\Phi_0^{\frac p2}(\frac{|x|}2)
   \\&\le &c_{31}|x|^{\alpha-N},
\end{eqnarray*}
thus, there exists $r>2$ such that
$$\Phi_0 <1,\quad  I_\alpha[\Phi_0^p]\le \frac14\quad {\rm in}\quad \R^N\setminus B_r(0).$$
Moreover, we observe that for $|x|\ge r$,
\begin{eqnarray*}
-\Delta \Phi_0+\Phi_0 =  \frac34 \Phi_0\ge   I_\alpha[\Phi_0^p] \Phi_0^q
\end{eqnarray*}
and
$$\mathbb{G}[I_\alpha[\Phi_0^p] \Phi_0^q]\le c_{32}\Phi_0 \quad {\rm on}\quad \partial B_r(0),$$
then it implies by Comparison Principle  that
$$\mathbb{G}[I_\alpha[\Phi_0^p] \Phi_0^q]\le c_{28}\Phi_0\quad{\rm in}\quad \R^N\setminus B_r(0).$$
This ends the proof. \end{proof}

\bigskip

\noindent {\bf Proof of Theorem \ref{teo 1}.}   Let $v_0:= k \Gamma_0>0$. We define the sequence $\{v_n\}_{n}$ by the iteration
$$
 v_n  =  \mathbb{G}[I_\alpha[v_{n-1}^p] v_{n-1}^q]+ k \Gamma_0, \; n\geq 1.
$$
Observe that
$$v_1= \mathbb{G} [I_\alpha[v_0^p] v_0^q ] + k \Gamma_0>v_0$$
and  assume that
$$
v_{n-1} \ge  v_{n-2} \quad{\rm in} \quad \R^N\setminus\{0\},
$$
 for $n \geq 2$, then we have 
\begin{eqnarray*}
 v_n =   \mathbb{G}[I_\alpha[v_{n-1}^p] v_{n-1}^q]+ k \Gamma_0
 \ge  \mathbb{G}[I_\alpha[v_{n-2}^p] v_{n-2}^q]+ k \Gamma_0
 =   v_{n-1}.
\end{eqnarray*}
Thus, the sequence $\{v_n\}_n$ is increasing with respect to $n$.
Moveover, we have that
\begin{equation}\label{4.2.3}
\int_{\R^N} v_n (-\Delta \xi+\xi) \,dx =\int_{\R^N} I_\alpha[v_{n-1}^p] v_{n-1}^q\xi \,dx +k\xi(0), \quad \forall \xi\in C^\infty_c(\R^N).
\end{equation}

We next build an upper bound for the sequence $\{v_n\}_n$.  For  $t>0$, denote by
$$w_t: =t k^{p+q}\mathbb{G}[I_\alpha[\Phi_0^p]\Phi_0^q]+ k\Phi_0\le (c_{28}t k^{p+q}+ k)\Phi_0,$$
where $c_{28}>0$ is from Proposition \ref{pr 4.1},
 then
\begin{eqnarray*}
  \mathbb{G} [I_\alpha[w_t^p]w_t^q]+k\Phi_0\le  (c_{28}t k^{p+q}+ k)^{p+q}\mathbb{G}[I_\alpha[\Phi_0^p] \Phi_0^q] +  k  \Phi_0
   \le  w_t,
\end{eqnarray*}
if
$$
 (c_{28}t k^{p+q}+ k)^{p+q}\le tk^{p+q},
$$
that is,
\begin{equation}\label{4.2.4}
  (c_{28}t k^{p+q-1} + 1)^{p+q}\le t.
\end{equation}

Note that the convex function $f_{k}(t) =  (c_{28}t k^{p+q-1} + 1)^{p+q}$  can intersect the line $g(t) = t$,  if
\begin{equation}\label{4.2.5}
    c_{28} k^{p+q-1}\le \frac1{p+q}\left(\frac{p+q-1}{p+q}\right)^{p+q-1}.
\end{equation}
Let $k_q=\left(\frac1{c_{28} (p+q)}\right)^{\frac1{p+q-1}}\frac{p+q-1}{p+q}$, then if $k\le k_q$, it always holds that $f_{k}(t_q)\le t_q$ for $ t_q=\left(\frac {p+q}{p+q-1}\right)^{p+q}.$
 Hence
 we have  $w_{t_q}>v_0$ and
$$v_1= \mathbb{G} [I_\alpha[v_0^p]v_0^q]+ k \Gamma_0\le t_q k^{p+q}\mathbb{G} [I_\alpha[\Phi_0^p]\Phi_0^q]+ k \Phi_0=w_{t_q}.$$
Inductively, we obtain
\begin{equation}\label{2.10a}
v_n\le w_{t_q}
\end{equation}
for all $n\in\N$. Therefore the sequence $\{v_n\}_n$ converges to some function $u_{k}$.
By \eqref{4.2.3}, $u_{k}$ is a weak solution of (\ref{eq 1.2})
and satisfies (\ref{1.2}).

For $k\le k_q$, we have that $u_k\le w_{t_q}$ in $\R^N\setminus\{0\}$, so  $u_k\in L^p(\R^N)$ and $I_\alpha[u_k^p]u_k^q$ is bounded uniformly locally in $\R^N\setminus\{0\}$,
then $u_k$ is a classical solution of (\ref{eq 1.1}).

We claim that $u_{k}$ is the minimal solution of (\ref{eq 1.1}), that is, for any positive solution $u$ of (\ref{eq 1.2}), we always have $u_{k}\leq u$. Indeed,  there holds
\[
 u  = \mathbb{G}[ I_\alpha[ u^p]u^q]+ k \Gamma_0\ge v_0,
\]
and then
\[
 u  = \mathbb{G}[ I_\alpha[ u^p]u^q]+ k \Gamma_0\ge  \mathbb{G}[ I_\alpha[ v_0^p]v_0^q]+ k \Gamma_0=v_{1}.
\]
We may show inductively that
\[
u\ge v_n
\]
for all $n\in\N$.  The claim follows.

 Similarly,  if problem (\ref{eq 1.2}) has a positive solution $u$  for some $ k_1>0$, then (\ref{eq 1.2}) admits a minimal solution $u_{k}$ for all $ k\in(0, k_1]$. As a result, the mapping $ k\mapsto u_{k}$ is increasing.
So we may define
$$k^*=\sup\{k>0:\ (\ref{eq 1.2})\ {\rm has\ minimal\ positive \ solution\ for\ }k \},$$
then $k^*\ge k_q.$

We next prove that $k^*<+\infty$.

Let $\eta_0$ be a radially symmetric $C^\infty_c$-function such that $\eta_0=0$ in $\R^N\setminus B_2(0)$ and $\eta_0=1$ in $B_1(0)$.
For $\epsilon\in(0,1)$,  denote
 $$\xi_\epsilon(x)=\eta_0(\epsilon x)\mathbb{G}[\eta_0](x),\quad x\in\R^N.$$
By the direct computation, we have that
\begin{eqnarray*}
-\Delta \xi_\epsilon(x) + \xi_\epsilon(x) =\epsilon^2 (-\Delta)\eta_0(\epsilon x) \mathbb{G}[\eta_0](x) + 2\epsilon  \nabla  \eta_0(\epsilon x) \cdot \nabla \mathbb{G}[\eta_0](x)+\eta_0 (\epsilon x)\eta_0(x).
\end{eqnarray*}
Choosing $\epsilon>0$ small, we deduce that
\begin{eqnarray*}
 \int_{\R^N} u_{k}\big(-\Delta \xi_\epsilon+ \xi_\epsilon \big)\,dx & =&\int_{\R^N}u_k(x)  [ -\epsilon^2 \Delta\eta_0(\epsilon x)\mathbb{G}[\eta_0](x) + 2\epsilon  \nabla  \eta_0(\epsilon x) \cdot \nabla \mathbb{G}[\eta_0](x)\\&&+\eta_0 (\epsilon x)\eta_0(x)]\,dx
\\&\le&  \int_{B_2(0)} u_k(x)\,dx + c_{33}(\epsilon+\epsilon^2)\int_{B_{\frac2\epsilon}(0)\setminus B_{\frac1\epsilon}(0)} u_k(x)\, dx
\\&\le& 2 \int_{B_2(0)} u_k(x)\,dx,
\end{eqnarray*}
where we have used $ess {\rm{sup}}_{\R^N\setminus B_{\frac1\epsilon}(0)} u_k(x)\to 0$ as $\epsilon\to0$.

Since
$$u_k\ge k\Gamma_0\ge c_{34}k \quad{\rm in}\quad B_1(0)$$
and
$$I_\alpha[u_k^p]\ge c_{35}k^p \quad{\rm in}\quad B_1(0)$$
for some $c_{40}>0$ independent of $k$, then
\begin{eqnarray*}
2 \int_{B_2(0)} u_k(x)\,dx  &\ge&    \int_{\R^N} u_{k}[(-\Delta) \xi_\epsilon+\xi_\epsilon]\,dx =\int_{\R^N} I_\alpha[u_{k}^p]u_k^q\xi_\epsilon\, dx
\\& \ge &  c_{36} k^{p+q-1}\int_{B_2(0)} u_k(x)\,dx,
\end{eqnarray*}
where $p+q>1$.
Thus,
$$
 k\le  c_{37},
$$
so does $k^*$ which ends the proof. \hfill$\Box$

\begin{remark}
Concerning the existence of weak solutions of (\ref{eq 1.2}) for $0<k<k^*$ in the subcritical case, we may consider the stability of the minimal solution $u_k$ and
then construct the second solution by using Mountain Pass Theorem \cite[Theorem~6.1]{struwe}.

\end{remark}

\section{Appendix}

It is well-known that  the Green kernel $G(x,y)$  of $-\Delta+id$ in $\R^N\times\R^N$ is $\Gamma_0(x-y)$, which has exponential decay at infinity.
We recall that $\mathbb{G}[\cdot]$  the
Green operator defined as
$$\mathbb{G}[f](x)=\int_{\R^N} G(x,y)f(y)dy. $$

\begin{proposition}\label{embedding}\cite[Lemma A.3]{NS}
Assume that   $h\in L^s(\R^N)$ with $s\ge1$, then

 \noindent$(i)$
\begin{equation}\label{a 4.1}
\|\mathbb{G}[h]\|_{L^\infty(\R^N)}\le c_{38}\|h\|_{L^s(\R^N)}\quad{\rm if}\quad \frac1s<\frac{ 2 }N;
\end{equation}

\noindent$(ii)$
\begin{equation}\label{a 4.2}
\|\mathbb{G}[h]\|_{L^r(\R^N)}\le c_{38}\|h\|_{L^s(\R^N)}\quad{\rm if}\quad \frac1s\le \frac1r+\frac{2}N\quad
\rm{and}\quad s>1;
\end{equation}

\noindent$(iii)$
\begin{equation}\label{a 4.02}
\|\mathbb{G}[h]\|_{L^r(\R^N)}\le c_{38}\|h\|_{L^1(\R^N)}\quad{\rm if}\quad 1<\frac1r+\frac{2}N.
\end{equation}
\end{proposition}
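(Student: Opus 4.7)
The plan is to identify $\mathbb{G}[\cdot]$ as convolution with the Bessel kernel $\Gamma_0$, and to exploit its two–scale structure: near the origin $\Gamma_0(x) \le c|x|^{2-N}$, while at infinity $\Gamma_0$ decays exponentially. Accordingly, write $\Gamma_0 = \Gamma_0^{\mathrm{loc}} + \Gamma_0^{\infty}$ with $\Gamma_0^{\mathrm{loc}} = \Gamma_0\chi_{B_1(0)}$ and $\Gamma_0^{\infty} = \Gamma_0\chi_{\R^N \setminus B_1(0)}$. Then $\Gamma_0^{\infty} \in L^p(\R^N)$ for every $p \in [1,\infty]$, while $\Gamma_0^{\mathrm{loc}}$ is pointwise dominated by $c|x|^{2-N}\chi_{B_1(0)}$ and therefore lies in $L^p(\R^N)$ precisely for $p \in [1, N/(N-2))$.

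For the exponentially decaying part, Young's convolution inequality
$\|\Gamma_0^{\infty} \ast h\|_{L^r} \le \|\Gamma_0^{\infty}\|_{L^q}\|h\|_{L^s}$ with $1 + \frac{1}{r} = \frac{1}{q} + \frac{1}{s}$
applies for any admissible $q \in [1,\infty]$, without restriction. For the singular part, the pointwise bound $\Gamma_0^{\mathrm{loc}} \ast |h| \le c\, I_2[|h|]$, where $I_2$ is the Riesz potential of order $2$, allows one to invoke the Hardy--Littlewood--Sobolev inequality $\|I_2[h]\|_{L^r} \le c\|h\|_{L^s}$, valid for $1 < s < r < \infty$ with $\frac{1}{s} - \frac{1}{r} = \frac{2}{N}$; the strict version $\frac{1}{s} - \frac{1}{r} < \frac{2}{N}$ then follows by Hölder's inequality, since $\Gamma_0^{\mathrm{loc}}$ has compact support.

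With these two tools the three cases unfold directly. For part (i), $\frac{1}{s} < \frac{2}{N}$ means $s' < N/(N-2)$, hence $\Gamma_0 \in L^{s'}(\R^N)$ and Hölder's inequality immediately yields $|\mathbb{G}[h](x)| \le \|\Gamma_0\|_{L^{s'}}\|h\|_{L^s}$ uniformly in $x$. For part (iii), with $s = 1$ and $\frac{1}{r} + \frac{2}{N} > 1$ one has $r < N/(N-2)$, so $\Gamma_0 \in L^r$ and Young with exponents $(q,s,r) = (r,1,r)$ closes the estimate. For part (ii) with $s > 1$, the strict case $\frac{1}{s} < \frac{1}{r} + \frac{2}{N}$ lets us pick $q$ with $\frac{1}{q} = 1 + \frac{1}{r} - \frac{1}{s} > \frac{N-2}{N}$, so $q < N/(N-2)$ and Young on both pieces of $\Gamma_0$ closes the estimate.

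The main obstacle is the endpoint of (ii), i.e.\ $\frac{1}{s} = \frac{1}{r} + \frac{2}{N}$, which forces precisely $q = N/(N-2)$, the critical exponent where $\Gamma_0 \notin L^q$ and Young's inequality is unavailable. The resolution is to bypass Young on the singular piece and use instead the Hardy--Littlewood--Sobolev inequality, whose hypotheses $1 < s, r < \infty$ are exactly those assumed in (ii); the nonsingular piece $\Gamma_0^{\infty} \ast h$ is then trivially absorbed since $\Gamma_0^{\infty} \in L^1(\R^N)$ yields $L^s \to L^s \hookrightarrow L^r$ locally after using the exponential decay. This asymmetry also explains why (iii) demands a strictly subcritical $r < N/(N-2)$: HLS is unavailable at $s=1$, and one must keep $\Gamma_0$ genuinely in $L^r$ to apply Young directly.
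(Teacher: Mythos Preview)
The paper does not supply its own proof of this proposition; it is quoted from \cite[Lemma~A.3]{NS} without argument, so there is nothing in the paper to compare your approach against. Your strategy---decomposing the Bessel kernel as $\Gamma_0 = \Gamma_0^{\mathrm{loc}} + \Gamma_0^{\infty}$, treating the exponentially decaying tail by Young's inequality and the singular core via the pointwise bound $\Gamma_0^{\mathrm{loc}} \le c\,|\cdot|^{2-N}\chi_{B_1}$ together with Hardy--Littlewood--Sobolev---is the standard route to Bessel potential estimates, and your arguments for (i), (iii), and the strict-inequality case of (ii) are correct.

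There is one genuine slip, in your handling of $\Gamma_0^{\infty}\ast h$ at the endpoint $\tfrac{1}{s}=\tfrac{1}{r}+\tfrac{2}{N}$ of (ii). You claim that $\Gamma_0^{\infty}\in L^1$ gives $L^s\to L^s\hookrightarrow L^r$ ``locally after using the exponential decay.'' This fails on two counts: at the endpoint one has $r>s$, and $L^s$ does not embed into $L^r$ for $r>s$ even on bounded sets; moreover $\Gamma_0^{\infty}\ast h$ inherits the spatial decay of $h$, not of $\Gamma_0^{\infty}$, so no exponential tail is available to salvage a global $L^r$ bound. The repair is immediate and already present in your own toolkit: since $\Gamma_0^{\infty}$ lies in \emph{every} $L^q$, apply Young directly with $\tfrac{1}{q}=1+\tfrac{1}{r}-\tfrac{1}{s}=1-\tfrac{2}{N}$, i.e.\ $q=N/(N-2)$, to obtain $\|\Gamma_0^{\infty}\ast h\|_{L^r}\le\|\Gamma_0^{\infty}\|_{L^{N/(N-2)}}\|h\|_{L^s}$. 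With this correction your proof is complete (for $r<\infty$; the borderline $s=N/2$, $r=\infty$ is the well-known failure of HLS and is presumably excluded in the cited reference).
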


Recall that
$$I_\alpha[h](x)=\int_{\R^N}\frac{h(y)}{|x-y|^{N-\alpha}}dy\quad{\rm for}\ \ h\in L^1(\R^N).$$

\begin{proposition}\label{embedding1}
Suppose that $\alpha\in(0,N)$, $\Omega\subset B_{R/2}(0)$ for some $R>0$   and $h\in L^s(B_R(0))\cap L^1(\R^N)$ for some $s\ge1$. Then
\begin{equation}\label{b 4.1}
\|I_\alpha[h]\|_{L^\infty(\Omega)}\le c_{39}\left(\|h\|_{L^s(B_R(0))}+\|h\|_{L^1(\R^N)}\right)\quad{\rm if}\quad \frac1s<\frac{\alpha }N,
\end{equation}
\begin{equation}\label{b 4.2}
\| I_\alpha[h]\|_{L^r(\Omega)}\le c_{39}\left(\|h\|_{L^s(B_R(0))}+\|h\|_{L^1(\R^N)}\right)\quad{\rm if}\quad \frac1s\le \frac1r+\frac{\alpha}N\quad
 {\rm and}\quad s>1
\end{equation}
and
\begin{equation}\label{b 4.02}
\| I_\alpha[h]\|_{L^r(\Omega)}\le c_{39}  \|h\|_{L^1(\R^N)} \quad{\rm if}\quad 1<\frac1r+\frac{\alpha}N.
\end{equation}
\end{proposition}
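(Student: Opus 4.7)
\textbf{Proof proposal for Proposition \ref{embedding1}.} The plan is to split the Riesz potential $I_\alpha[h](x) = \int_{\R^N} |x-y|^{\alpha-N}h(y)\,dy$ into a near-field piece over $B_R(0)$ and a far-field piece over $\R^N\setminus B_R(0)$, then treat each piece by a standard tool (Hölder, Young, or Hardy--Littlewood--Sobolev). Concretely, for $x\in\Omega\subset B_{R/2}(0)$ write
\begin{equation*}
I_\alpha[h](x) = \int_{B_R(0)} \frac{h(y)}{|x-y|^{N-\alpha}}\,dy + \int_{\R^N\setminus B_R(0)} \frac{h(y)}{|x-y|^{N-\alpha}}\,dy =: J_1(x)+J_2(x).
\end{equation*}

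The far-field term $J_2$ is easy: if $|y|\ge R$ and $|x|\le R/2$, then $|x-y|\ge R/2$, so $J_2(x)\le (R/2)^{\alpha-N}\|h\|_{L^1(\R^N)}$ uniformly in $x\in\Omega$. This immediately gives the required $L^\infty(\Omega)$ and hence $L^r(\Omega)$ control (since $\Omega$ is bounded) in all three cases, contributing only the $\|h\|_{L^1(\R^N)}$ factor that appears in the right-hand sides.

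For the near-field term $J_1$, observe that $B_R(0)\subset B_{3R/2}(x)\subset B_{2R}(x)$, so $J_1(x) \le (|\cdot|^{\alpha-N}\chi_{B_{2R}(0)} * \bar h)(x)$, where $\bar h := h\chi_{B_R(0)}$. I would then distinguish the three cases of the statement. For (\ref{b 4.1}), apply Hölder with exponent $s'=s/(s-1)$: the condition $\frac{1}{s}<\frac{\alpha}{N}$ is exactly $(N-\alpha)s'<N$, so $|\cdot|^{\alpha-N}\in L^{s'}(B_{2R}(0))$ uniformly in $x$, giving $\|J_1\|_{L^\infty(\Omega)} \le C\|h\|_{L^s(B_R(0))}$. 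For (\ref{b 4.2}), I would use the Hardy--Littlewood--Sobolev inequality: when $\frac{1}{s}>\frac{\alpha}{N}$ it yields $\|I_\alpha[\bar h]\|_{L^{r_0}(\R^N)} \le C\|h\|_{L^s(B_R(0))}$ with $\frac{1}{r_0}=\frac{1}{s}-\frac{\alpha}{N}$; since $\frac{1}{r}\ge\frac{1}{r_0}$ and $\Omega$ is bounded, Hölder gives the $L^r(\Omega)$ bound, while the case $\frac{1}{s}\le\frac{\alpha}{N}$ follows from (\ref{b 4.1}) combined with $|\Omega|<\infty$. Finally, for (\ref{b 4.02}), I would use Young's convolution inequality: the hypothesis $\frac{1}{r}+\frac{\alpha}{N}>1$ is exactly $(N-\alpha)r<N$, so $|\cdot|^{\alpha-N}\chi_{B_{2R}(0)}\in L^{r}(\R^N)$, and Young gives $\|J_1\|_{L^r(\R^N)} \le C\|h\|_{L^1(\R^N)}$.

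There is no genuine difficulty in this argument; each estimate is a routine application of a classical inequality, and the splitting reduces matters to a kernel supported in a compact set where the appropriate integrability of $|z|^{\alpha-N}$ can be read off directly from the range of the exponents. The only point requiring mild care is to verify that the three conditions on $(s,r,\alpha,N)$ translate correctly into the local integrability conditions on $|z|^{\alpha-N}$ needed to invoke Hölder or Young, which I have done in each case above. Adding the near-field and far-field bounds yields the stated inequalities with a single constant $c_{39}$ depending on $R$, $N$, $\alpha$, $s$, $r$.
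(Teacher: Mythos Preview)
Your proposal is correct. The near/far splitting and the treatment of the far-field contribution $J_2$ are exactly what the paper does, and your proof of \eqref{b 4.1} via H\"older is identical to the paper's.

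Where you differ is in the near-field estimate for \eqref{b 4.2} and \eqref{b 4.02}: you invoke the Hardy--Littlewood--Sobolev inequality and Young's convolution inequality as black boxes, whereas the paper gives a self-contained argument. For $r\le s$ the paper applies Minkowski's inequality directly to $J_\alpha[h]$ to obtain $\|J_\alpha[h]\|_{L^r(\Omega)}\le c\|h\|_{L^r(B_R(0))}$, and for $r>s$ it proves a weak-type $(s,r^*)$ bound (with $1/r^*=1/s-\alpha/N$) by splitting the kernel $|x-y|^{\alpha-N}$ at a moving radius $\nu$ and optimizing, then concludes via Marcinkiewicz interpolation. In effect the paper re-derives the local HLS estimate from scratch, while you simply cite it. Your route is shorter and perfectly legitimate; the paper's route is more elementary and makes the dependence on $R$ explicit without appealing to any named theorem. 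One small point: your handling of the borderline $1/s=\alpha/N$ in \eqref{b 4.2} appeals to \eqref{b 4.1}, which requires strict inequality; this is harmless since on the bounded ball one can lower $s$ slightly and still apply HLS, but it is worth saying explicitly.
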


\begin{proof}
Observe that  $|x-y|> \frac{R}{2}$ for $x\in \Omega$ and $y\in \R^N\setminus B_{R}(0)$ and then for $x\in\Omega$,
\begin{eqnarray*}
|I_\alpha[h](x)| &\le & \int_{B_{R}(0)}\frac{|h(y)|}{|x-y|^{N-\alpha}}dy +  \int_{\R^N\setminus B_{R}(0)} \frac{|h(y)|}{|x-y|^{N-\alpha}}dy
 \\&\le& \int_{B_{R}(0)}\frac{|h(y)|}{|x-y|^{N-\alpha}}dy+ (R/2)^{\alpha-N} \norm{h}_{L^1(\R^N)}.
\end{eqnarray*}
Without loss of generality, we can assume
$h\ge 0$ and in the following, we only need to consider
$$J_\alpha[h](x):=\int_{B_{R}(0)}\frac{h(y)}{|x-y|^{N-\alpha}}dy,\quad x\in\Omega.$$

 First we prove (\ref{b 4.1}). By H\"{o}lder's inequality,
\begin{eqnarray*}
  J_\alpha[h](x)\
 &\le&  \big(\int_{B_R(0)} \frac1{|x-y|^{(N-\alpha)s'}}dy\big)^{\frac1{s'}} \big(\int_{B_R(0)}|h(y)|^sdy\big)^\frac1s
 \\&\le&c_{40}\|h\|_{L^s(B_R(0))} \int_{B_R(0)}
 \frac1{|x-y|^{(N-\alpha)s'}}dy,
\end{eqnarray*}
where $s'=\frac s{s-1}$. Since $\frac1s<\frac{2}N$ and $(N-\alpha)s'<N$, we have
\begin{eqnarray*}
\int_{B_R(0)}
 \frac1{|x-y|^{(N-\alpha)s'}}dy
 =c_{41}\int_0^R r^{N-1-(N-\alpha)s'}dr
 \le c_{42} R^{N-(N-\alpha)s'},
\end{eqnarray*}
 then  (\ref{b 4.1}) holds.

Next, we prove (\ref{b 4.2}) for $r\le s$ and (\ref{b 4.02}) for
$r = 1$. There holds
\begin{eqnarray*}
 \left[\int_{B_R(0)}J_\alpha[h]^r(x)dx\right]^{\frac1r}&\le& \left\{\int_{\R^N}\bigg[\int_{\R^N}\frac{h(y)\chi_{B_R(0)}(x)\chi_{B_R(0)}(y)}{|x-y|^{N-\alpha}}dy\bigg]^rdx\right\}^{\frac1r}
 \\&=& \left\{\int_{\R^N}\left[\int_{\R^N}\frac{h(x-y)\chi_{B_R(0)}(x)\chi_{B_R(0)}(x-y)}{|y|^{N-\alpha}}dy\right]^rdx\right\}^{\frac1r}.
\end{eqnarray*}
By  Minkowski's inequality, we have that
\begin{eqnarray*}
  \left[\int_{B_R(0)}J_\alpha[h]^r(x)dx\right]^{\frac1r}&\le&
 \int_{\R^N}\bigg[\int_{\R^N}\frac{h^r(x-y)\chi_{B_R(0)}(x)\chi_{B_R(0)}(x-y)}{|y|^{(N-\alpha)r}}dx\bigg]^{\frac1r}dy
 \\&\le &  \int_{B_{2R}(0)}\bigg[\int_{\R^N} h^r(x-y)\chi_{B_R(0)}(x)\chi_{B_R(0)}(x-y)dx\bigg]^{\frac1r}\frac1{|y|^{N-\alpha}}dy
\\&\le &  \|h\|_{L^r(B_R(0))}\int_{B_{2R}(0)}\frac1{|y|^{N-\alpha}}dy
\\&\le& c_{43}\|h\|_{L^s(B_R(0))}.
\end{eqnarray*}

Finally, we prove (\ref{b 4.2}) in the case
$r> s\ge1$ and $\frac1s\le \frac1r+\frac{2}N$,  and (\ref{b 4.02}) for $r> 1$ and $1< \frac1r+\frac{\alpha}N.$
We claim that if $r>s$ and $\frac1{r^*}=\frac1{s}-\frac{\alpha}N$, the mapping
$h\to  J_\alpha(h)$ is of weak-type $(s,r^*)$ in the sense that
\begin{equation}\label{weak rs-1}
|\{x\in\Omega: | J_\alpha[h](x)|>t\}|\le
\big(A_{s,r^*}\frac{\|h\|_{L^{s}(B_R(0))}}{t}\big)^{r^*},\quad h\in
L^s(B_R(0)),\ \ \forall t>0,
\end{equation}
where $A_{s,r^*}$ is a positive constant. Defining
$$
J_0(x,y)=\left\{ \arraycolsep=1pt
\begin{array}{lll}
|x-y|^{\alpha-N}\quad & {\rm if}\quad |x-y|\le \nu,\\[2mm]
0\quad & {\rm if}\quad |x-y|> \nu
\end{array}
\right.
$$
 for $\nu>0$ and  $J_\infty(x,y)=|x-y|^{\alpha-N}-J_0(x,y)$. Then we have that
$$|\{x\in\Omega:   J_\alpha[h](x) >2t\}|\le |\{x\in\Omega:  J_0[h](x) >t\}|+|\{x\in\Omega:  J_\infty[h](x) >t\}|,$$
where $J_0[h]=\int_{B_R(0)} J_0(x,y) h(y)\,dy$ and $J_\infty[h]=\int_{B_R(0)} J_\infty(x,y) h(y)\,dy$.
By Minkowski's inequality, we obtain that
\begin{eqnarray*}
|\{x\in B_R(0): |J_0[h](x)|>t\}|&\le&
\frac{\|J_0(h)\|^s_{L^s(B_R(0))}}{t^s}
\\&\le &\frac{\|\int_{B_R(0)} \chi_{B_\nu(x-y)}|x-y|^{\alpha-N}|h(y)|dy\|^s_{L^s(B_R(0))}}{t^s}
\\&\le&\frac{[\int_{B_R(0)}(\int_{B_R(0)} |h(x-y)|^sdx)^{\frac1s}|y|^{\alpha-N}\chi_{B_\nu}(y)dy]^s}{t^s}
\\&\le&\frac{\|h\|^s_{L^s(B_R(0))} \int_{B_\nu}|x|^{-N+\alpha}dx }{t^s}=c_{44}\frac{\|h\|^s_{L^s(B_R(0))} \nu^{\alpha}  }{t^s}.
\end{eqnarray*}
On the other hand,
\begin{eqnarray*}
\|J_\infty[h]\|_{L^\infty(B_R(0))}&\le&\|\int_{B_R(0)}
\chi_{B_\nu^c}(x-y)|x-y|^{\alpha-N}|h(y)|dy\|_{L^\infty(B_R(0))}
\\&\le&(\int_{B_R(0)} |h(y)|^sdy)^{\frac1s}\|(\int_{\Omega\setminus B_\nu(y)}|x-y|^{(\alpha-N)s'}dy)^{\frac1{s'}}\|_{L^\infty(B_R(0))}
\\&\le&c_{45}\|h\|_{L^s(B_R(0))}\nu^{\alpha-\frac
Ns},
\end{eqnarray*}
where $s'=\frac s{s-1}$ if $s>1$, and if $s = 1$, $s'=\infty$. Choosing $\nu=(\frac
t{c_{45}\|h\|_{L^s(B_R(0))}})^{\frac1{\alpha-\frac Ns}}$, we obtain
$$\|J_\infty[h]\|_{L^\infty(B_R(0))}\le t,$$
which means that
$$|\{x\in\Omega: |J_\infty[h](x)|>t\}|=0.$$
With this choice of $\nu$, we have that
$$|\{x\in\Omega: |J_\alpha[h]|>2t\}|\le c_{46}\frac{\|h\|^s_{L^s(B_R(0))}\nu^{\alpha s}}{t^s}
\le c_{47}\left(\frac{\|h\|_{L^s(B_R(0))}}{t}\right)^{r^*}.$$ The claim
for $r>s$ follows from the Marcinkiewicz
interpolation theorem.
\end{proof}

\thanks{Acknowledgements: The research of the first
author is supported by NSFC (11401270). The research of the second author is partially supported by NSFC (11271133 and 11431005) and
and Shanghai Key Laboratory of PMMP.}


\end{document}